\setlist{noitemsep, topsep=0ex, labelsep=1em}
\setlist[enumerate, 1]{label=(\Roman*), ref=(\Roman*), left=1em}
\theoremstyle{plain}
\newtheorem{theorem}{Theorem}[section]
\newtheorem{lemma}[theorem]{Lemma}
\newtheorem{proposition}[theorem]{Proposition}
\newtheorem{corollary}[theorem]{Corollary}
\theoremstyle{definition}
\newtheorem{definition}[theorem]{Definition}
\newtheorem{example}[theorem]{Example}
\theoremstyle{remark}
\newtheorem{remark}[theorem]{Remark}
\newcommand{\vocab}[1]{\textcolor{blue!50!black}{\emph{#1}}} 
\DeclareSymbolFont{sansops}{T1}{\sfdefault}{m}{n}
\renewcommand\operator@font{\mathgroup\symsansops}
\newcommand{\suchthat}{\,:\,}
\newcommand{\rationals}{\mathbb{Q}}
\newcommand{\permact}{\cdot}
\newcommand{\sym}[1]{\mathscr{S}_{#1}} 
\DeclareMathOperator{\adg}{\widehat{dg}}
\DeclareMathOperator{\Arm}{Arm}
\DeclareMathOperator{\leftArm}{Arm\textsuperscript{left}}
\DeclareMathOperator{\rightArm}{Arm\textsuperscript{right}}
\DeclareMathOperator{\leg}{leg}
\DeclareMathOperator{\arm}{arm}
\DeclareMathOperator{\Fill}{F}
\DeclareMathOperator{\NAF}{NAF}
\DeclareMathOperator{\rev}{rev}
\DeclareMathOperator{\inc}{inc}
\DeclareMathOperator{\dec}{dec}
\DeclareMathOperator{\content}{content}
\DeclareMathOperator{\twinv}{twinv}
\newcommand{\swap}[2][]{\mathfrak{t}_{#2}\ifx&#1&\else^{(#1)}\fi}
\newcommand{\sswap}[2][]{\mathfrak{t}_{#2}\ifx&#1&\else^{[#1]}\fi}
\newcommand{\esop}[2][]{\tilde\tau_{#2}\ifx&#1&\else^{(#1)}\fi}
\newcommand{\prob}[1]{\operatorname{prob}_{#1}}
\newcommand{\wt}[1][]{{\operatorname{wt}\ifx&#1&\else^{(#1)}\fi}}
\newcommand{\wtqt}[1][]{{\operatorname{wt}_{q, t}\ifx&#1&\else^{(#1)}\fi}}
\newcommand{\maj}[1][]{{\operatorname{maj}\ifx&#1&\else^{(#1)}\fi}}
\newcommand{\coinv}[1][]{{\operatorname{coinv}\ifx&#1&\else^{(#1)}\fi}}
\newcommand{\inv}[1][]{{\operatorname{inv}\ifx&#1&\else^{(#1)}\fi}}
\newcommand{\Des}[1][]{{\operatorname{Des}\ifx&#1&\else^{(#1)}\fi}}
\newcommand{\des}[1][]{{\operatorname{des}\ifx&#1&\else^{(#1)}\fi}}
\newcommand{\dg}[1][]{{\operatorname{dg}\ifx&#1&\else^{(#1)}\fi}}
\newcommand{\south}[1]{d(#1)}
\newcommand{\algebra}{A}
\newcommand{\atompoly}{\mathcal{A}}
\newcommand{\keypoly}{\mathcal{K}}
\DeclareMathOperator{\id}{id}
\newcommand{\interval}[1]{[#1]}
\newcommand{\composition}[2][]{%
	\def\temp{#2}\ifx\temp\empty%
	\varnothing%
	\else%
	\readlist\thecycle{#2}%
	\foreachitem\i\in\thecycle{\ifnum\icnt=1\else#1\fi\i}%
	\fi%
}
\newcommand{\window}[2][,]{%
	\left[
	\readlist\thecycle{#2}%
	\foreachitem\i\in\thecycle{\ifnum\icnt=1\else#1\fi\i}%
	\right]
}
\newcommand{\ind}[2][]{%
	\readlist\thecycle{#2}%
	\chi_{%
	\foreachitem\i\in\thecycle{\ifnum\icnt=1\else#1\fi\i}%
	}
}
\title[Probabilistic Bijections for Non-Attacking Fillings]{Probabilistic Entry Swapping Bijections for Non-Attacking Fillings}
\author{{Guilherme Zeus} {Dantas e Moura}}
\address{
	Department of Combinatorics \& Optimization\\
	University of Waterloo\\
	Canada
	}
\email{zeus@guilhermezeus.com, zeus.dantasemoura@uwaterloo.ca}
\author{Olya Mandelshtam}
\address{
	Department of Combinatorics \& Optimization\\
	University of Waterloo\\
	Canada
	}
\email{omandels@uwaterloo.ca}
\date{}
\begin{document}

\begin{abstract}
	Non-attacking fillings are combinatorial objects
	central to the theory of Macdonald polynomials.
	A probabilistic bijection for partition-shaped non-attacking fillings
	was introduced by \citeauthor{Man24}~(\citeyear{Man24})
	to prove a compact formula for symmetric Macdonald polynomials.

	In this work,
	we generalize this probabilistic bijection
	to composition-shaped non-attacking fillings.
	As an application,
	we give a bijective proof to extend a symmetry theorem
	for permuted-basement Macdonald polynomials
	established by \citeauthor{Ale19}~(\citeyear{Ale19}),
	proving a version with fewer assumptions.
\end{abstract}

\maketitle

\section{Introduction}

The \emph{symmetric Macdonald polynomials}
\(P_\lambda(x_1, \ldots, x_n; q, t)\),
indexed by partitions \(\lambda\),
form a family of symmetric polynomials
in the variables \(x_1, \ldots, x_n\) over the field \(\rationals(q, t)\).
They are the unique monic basis of
the ring of symmetric polynomials of \(\rationals(q, t)[x_1, \ldots, x_n]\)
satisfying certain orthogonality and triangularity conditions.

The \emph{nonsymmetric Macdonald polynomials}
\(E_\alpha(x_1, \ldots, x_n; q, t)\),
indexed by weak compositions \(\alpha=\composition{\alpha_1,\ldots,\alpha_n}\),
were introduced in \cite{Opd95}, \cite{Mac96}, and \cite{Che95}
as a generalization of the symmetric Macdonald polynomials.
They are the monic eigenfunctions of the Cherednik--Dunkl operators
and form a basis of the polynomial ring \(\rationals(q, t)[x_1, \ldots, x_n]\).

The \emph{permuted-basement Macdonald polynomials}
\(E_\alpha^\sigma(x_1, \ldots, x_n; q, t)\),
first defined by \cite{Fer11},
introduce an additional level of indexing,
pairing a weak composition \(\alpha\) with a permutation \(\sigma\in S_n\).
They are obtained from the nonsymmetric Macdonald polynomials
by applying sequences of Demazure--Lusztig operators
(see \cref{sec:permuted basement def}).
For any fixed \(\sigma\),
the set of permuted-basement Macdonald polynomials
\(E_\alpha^\sigma(x_1,\ldots,x_n;q,t)\)
ranging over all weak compositions \(\alpha\)
forms a basis for \(\rationals(q, t)[x_1, \ldots, x_n]\).
Moreover, for a partition \(\lambda=(\lambda_1\geq\cdots\geq\lambda_n\geq 0)\),
it was observed in \cite{CMW22} that
\begin{equation}
	\label{eq:P in Es}
	P_{\lambda}(x_1,\ldots,x_n;q,t)
	=
	\sum_{\mu\in S_n\permact \lambda}
	E_{\inc(\lambda)}^{\sigma_\mu}(x_1,\ldots,x_n;q,t),
\end{equation}
where \(S_n\permact \lambda\) is the set of rearrangements of \(\lambda\),
\(\inc(\lambda)=(\lambda_n,\ldots,\lambda_1)\),
and \(\sigma_\mu\) is the longest permutation
that rearranges \(\inc(\lambda)\) into \(\mu\).

There is a combinatorial interpretation
for permuted-basement nonsymmetric Macdonald polynomials \(E_\alpha^\sigma\)
due to \cite{Fer11}
as the generating function of non-attacking fillings
of composition shape \(\alpha\) and \emph{basement} \(\sigma\).
This interpretation generalizes the tableaux formula of \citeauthor{HHL08}
for \(P_{\lambda}\) \cite{HHL08} (see \cref{sec:tableau-formula}).

\textcite[Theorem~22]{Ale19} established a partial symmetry result
for permuted-basement Macdonald polynomials under certain conditions.
Using \cite[Lemma~20 and Proposition~15]{Ale19},
we reformulate the result as follows.

\begin{restatable}[equivalent to {\cite[Theorem~22]{Ale19}}]{theorem}{symmetryAle}
	\label{theorem:symmetry-Ale19}
	Let \(\alpha=\composition{\alpha_1,\ldots,\alpha_n}\) be a composition
	with \(\alpha_i=\alpha_{i+1}\) for some \(i\in\interval{n-1}\).
	Let \(\sigma \in \sym{n}\), such that \(\sigma_{i+1} = \sigma_i \pm 1\).
	Then,
	\begin{equation*}
		E_{\alpha}^{\sigma}(\mathbf{x}; q, t)
		= E_{\alpha}^{\sigma s_i}(\mathbf{x}; q, t).
	\end{equation*}
\end{restatable}

Our main result is stated in the following theorem,
which extends \cref{theorem:symmetry-Ale19}
by removing the assumption that \(\sigma_i\) and \(\sigma_{i+1}\) are consecutive.

\begin{restatable}{theorem}{main}
	\label{theorem:main}
	Let \(\alpha=\composition{\alpha_1,\ldots,\alpha_n}\) be a composition
	with \(\alpha_i=\alpha_{i+1}\) for some \(i\in\interval{n-1}\),
	and let \(\sigma \in \sym{n}\).
	Then,
	\begin{equation*}
		E_{\alpha}^{\sigma}(\mathbf{x}; q, t)
		= E_{\alpha}^{\sigma s_i}(\mathbf{x}; q, t).
	\end{equation*}
\end{restatable}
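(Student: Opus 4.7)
The plan is to deduce \cref{theorem:main} as a direct application of the probabilistic entry-swapping bijection for composition-shaped non-attacking fillings developed in the main technical sections of the paper. The hypothesis $\alpha_i = \alpha_{i+1}$ ensures that columns $i$ and $i+1$ of the shape $\alpha$ have equal height, which is the geometric prerequisite that makes entry-swapping between these two columns well-defined.

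Concretely, I would expand $E_\alpha^\sigma$ as the generating function $\sum_F \wtqt(F)$ over non-attacking fillings $F$ of shape $\alpha$ with basement $\sigma$, and similarly $E_\alpha^{\sigma s_i} = \sum_{F'} \wtqt(F')$ over fillings of the same shape with basement $\sigma s_i$. The probabilistic bijection provides a stochastic kernel $P$ from the first set of fillings to the second with the key property that for each $F$,
\begin{equation*}
	\wtqt(F) = \sum_{F'} P(F, F') \wtqt(F'),
\end{equation*}
together with row sums $\sum_{F'} P(F, F') = 1$. Summing both sides over $F$ immediately gives $E_\alpha^\sigma = E_\alpha^{\sigma s_i}$.

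The main obstacle is hidden in the construction of the probabilistic bijection itself, not in this deduction. Unlike \cref{theorem:symmetry-Ale19}, which only handles the case $|\sigma_i - \sigma_{i+1}| = 1$, here $\sigma_i$ and $\sigma_{i+1}$ can differ by any amount, so other columns $j$ whose basement values $\sigma_j$ lie strictly between $\sigma_i$ and $\sigma_{i+1}$ can hold entries that interact nontrivially with the two columns being swapped via the non-attacking condition. A single deterministic involution cannot reconcile these interactions with the $q,t$-weight in general; the probabilistic mechanism, generalizing the partition-shape construction of \textcite{Man24}, resolves this by averaging over multiple candidate images of a single filling, with transition probabilities tailored to the arm and leg statistics of the affected cells so that the displayed weight identity holds on the nose. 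The technical heart of the paper is to check that these probabilities are well-defined (nonnegative and summing to one) and that the local weight balance survives the passage from partitions, where columns form a Young-diagram staircase, to arbitrary compositions, where adjacent columns can be shorter or taller than their neighbors.
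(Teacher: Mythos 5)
Your overall strategy is the paper's: expand both polynomials as weight generating functions over non-attacking fillings and conclude via the probabilistic entry-swapping bijection between $\NAF(\alpha,\sigma)$ and $\NAF(\alpha,\sigma s_i)$. However, two steps in your reduction are stated imprecisely, and one of them breaks the argument as written. First, the tableau formula gives $E_\alpha^\sigma=\sum_T\wt(T)=\sum_T\mathbf{x}^T\wtqt(T)$, not $\sum_T\wtqt(T)$; the paper handles the monomial factor by fixing a content $\beta$ and applying the bijection to $\NAF(\alpha,\sigma,\beta)$, which is legitimate because $\prob{i}(T,U)=0$ whenever $T$ and $U$ have different contents. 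Second, and more seriously, the identity you attribute to the bijection, $\wtqt(F)=\sum_{F'}P(F,F')\wtqt(F')$, is not what the construction provides and is in fact false for the paper's kernel (e.g.\ in \cref{fig:prob-2201-3124-1022}, $T_1$ maps to $U_1$ with probability $1$ but $\wtqt(T_1)\neq\wtqt(U_1)$); moreover, summing it over $F$ yields $\sum_{F'}\bigl(\sum_F P(F,F')\bigr)\wtqt(F')$, which equals $\sum_{F'}\wtqt(F')$ only if the \emph{column} sums are $1$, a condition you have not imposed. What the bijection actually supplies is a \emph{pair} of kernels, $\prob{i}$ on $\NAF(\alpha,\sigma)\times\NAF(\alpha,\sigma s_i)$ and $\prob{i}$ on $\NAF(\alpha,\sigma s_i)\times\NAF(\alpha,\sigma)$, each with row sums $1$, satisfying the detailed-balance relation $\wtqt(T)\prob{i}(T,U)=\wtqt(U)\prob{i}(U,T)$ (\cref{theorem:wt-prob}); one then computes $\sum_T\wtqt(T)=\sum_T\sum_U\wtqt(T)\prob{i}(T,U)=\sum_U\sum_T\wtqt(U)\prob{i}(U,T)=\sum_U\wtqt(U)$, as in \cref{proposition:equal-weight-sum}. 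With those two corrections your deduction coincides with the paper's proof, and your diagnosis of where the real work lies, namely in constructing the kernels and verifying the local weight balance for composition shapes, is accurate.
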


\cref{theorem:main} is proved algebraically
from \cref{theorem:symmetry-Ale19} in \cref{sec:proof-alternate}.
However, in this article,
our focus is on providing a fully combinatorial proof
of \cref{theorem:symmetry-Ale19,theorem:main}
by constructing a probabilistic bijection on non-attacking fillings.

The combinatorial interpretation of \cref{theorem:main} is that
swapping the basement entries in two adjacent columns of the same height
does not change the generating function for non-attacking fillings.
Our approach for proving this builds upon a (classical) bijection
introduced in \cite{CHMMW22} for unrestricted fillings of partition shape,
in which two adjacent entries of the filling
in the bottom row of columns of the same height are swapped
while preserving the overall weight of the filling.
In \cite{Man24}, the second author extended this technique
by introducing a probabilistic bijection
for partition-shaped non-attacking fillings.
In this article,
we generalize this probabilistic bijection
to composition-shaped non-attacking fillings,
thereby giving a bijective proof of \cref{theorem:main}.

As implications of \cref{theorem:main},
certain assumptions can be removed from
theorems in \cite{Ale19} and \cite{CMW22},
which we summarize in \cref{sec:applications}.

This article is organized as follows.
\cref{sec:definitions} contains preliminaries on
permuted-basement Macdonald polynomials
and their tableaux formula in terms of non-attacking fillings.
\cref{sec:probabilistic} defines a probabilistic bijection on non-attacking fillings,
leading to a combinatorial proof of \cref{theorem:main} in \cref{sec:proof}.
Finally, \cref{sec:applications} discusses some implications of \cref{theorem:main}.

\subsection*{Acknowledgements}
We would like to thank
Per Alexandersson,
Sarah Mason,
and Arun Ram
for helpful discussions.
OM was supported by NSERC grant RGPIN-2021-02568.

\section{Preliminaries}
\label{sec:definitions}

\subsection{Permutations and compositions}

Throughout the paper, let \(n\) be a nonnegative integer.
The \vocab{symmetric group} \(\sym{n}\) is
the group of all permutations of the set
\(\interval{n} = \{1, 2, \ldots, n\}\).
We write permutations in one-line notation
\(\sigma = \window{\sigma_1, \sigma_2, \ldots, \sigma_n}\),
with brackets to distinguish them from compositions.
Given two permutations \(\sigma, \pi \in \sym{n}\),
their product is the permutation \(\sigma \pi \in \sym{n}\)
defined by \((\sigma \pi)_i = \sigma_{\pi_i}\).
The \vocab{reverse} \(\rev(\sigma)\) of
a permutation \(\sigma \in \sym{n}\) is the permutation
\(\sigma w_0 = \window{\sigma_n, \sigma_{n-1}, \dots, \sigma_1}\),
where \(w_0 = \window{n, n-1, \cdots, 1}\).
The symmetric group \(\sym{n}\) is generated by
the \vocab{simple transpositions} \(s_1, s_2, \ldots, s_{n-1}\),
where \(s_i\) swaps \(i\) and \(i+1\).
The \vocab{length} \(\ell(\sigma)\) of a permutation \(\sigma \in \sym{n}\) is
the smallest number of simple transpositions whose product equals \(\sigma\).
A \vocab{reduced expression} of a permutation \(\sigma\) is
a sequence \(s_{i_1},s_{i_2},\ldots,s_{i_{\ell(\sigma)}}\) of simple transpositions
whose product \(s_{i_1}s_{i_2}\cdots s_{i_{\ell(\sigma)}}\) equals \(\sigma\).

A \vocab{composition of length \(n\)} is a sequence
\(\alpha = \composition{\alpha_1, \alpha_2, \ldots, \alpha_n}\)
of nonnegative integers.
A \vocab{partition} is a weakly decreasing composition,
and an \vocab{antipartition} is a weakly increasing composition.
Given a composition \(\alpha\), let \(\dec(\alpha)\) and \(\inc(\alpha)\) denote
the partition and antipartition obtained by sorting the parts of \(\alpha\)
in weakly decreasing and weakly increasing order, respectively.
Define \(\rev(\alpha)=\composition{\alpha_n, \alpha_{n-1}, \ldots, \alpha_1}\)
to be the composition given by reversing the parts of \(\alpha\).

The \vocab{exponent notation} for a composition \(\alpha = \composition{\alpha_1, \ldots, \alpha_n}\)
is given by a sequence of nonnegative integers \(p_1, p_2, \ldots, p_k\)
and positive integers \(m_1, m_2, \ldots, m_k\) that sum to \(n\)
such that \(p_i \neq p_{i+1}\) for \(i\in[k-1]\), written as
\begin{equation*}
	\alpha = \composition{
		\underbrace{p_1 p_1 \ldots p_1}_{m_1},
		\underbrace{p_2 p_2 \ldots p_2}_{m_2},
		\ldots,
		\underbrace{p_k p_k \ldots p_k}_{m_k}
	} = p_1^{m_1} p_2^{m_2} \ldots p_k^{m_k}.
\end{equation*}

Given a permutation \(\pi \in \sym{n}\) and a composition \(\alpha = \composition{\alpha_1, \ldots, \alpha_n}\),
the \vocab{left action} of \(\pi\) on \(\alpha\) is the composition
\(\pi \permact \alpha = \composition{\alpha_{\pi^{-1}(1)}, \alpha_{\pi^{-1}(2)}, \ldots, \alpha_{\pi^{-1}(n)}}\).
In particular,
this choice of notation guarantees that
\(\sigma \permact (\pi \permact \alpha) = (\sigma \pi) \permact \alpha\)
for all \(\sigma, \pi \in \sym{n}\) and
\(\alpha=\composition{\alpha_1,\ldots,\alpha_n}\).
Note that \(\rev(\alpha) = w_0 \permact \alpha\).

\begin{example}
	For example, \(\alpha=\composition{3, 1, 0, 2, 2, 0}\) is
	a composition of length \(6\) with exponent notation \(3^11^10^12^20^1\).
	It rearranges to the partition \(\dec(\alpha) = \composition{3, 2, 2, 1, 0, 0}\),
	the antipartition \(\inc(\alpha) = \composition{0, 0, 1, 2, 2, 3}\),
	and \(\rev(\alpha)=\composition{0,2,1,0,2,3}\).
	The permutation \(\pi = \window{3, 1, 2, 4, 6, 5}\) has reduced expression \(s_2s_1s_5\).
	The result of the left action of \(\pi\) on \(\alpha\) is
	the composition \(\pi \permact \alpha = \composition{1, 0, 3, 2, 0, 2}\),
	with a graphical representation shown in \Cref{fig:left-action-on-composition}.
\end{example}

\begin{figure}[htbp]
	\begin{tikzpicture}[x=.4cm, y=-.5cm]
		\node[anchor=east] at (0.7, 0) {\(\alpha = \)};
		\foreach \i [count = \j] in {3, 1, 0, 2, 2, 0}{
				\node (A\j) at (\j, 0) {\(\i\)};
			}
		\node[anchor=east] at (0.7, 2) {\(\pi \permact \alpha = \)};
		\foreach \i [count = \j] in {1, 0, 3, 2, 0, 2}{
				\node (B\j) at (\j, 2) {\(\i\)};
			}
		\node[anchor=east, color=red!70!black] at (7, 1) {\(\scriptstyle \pi\)};
		\foreach \i [count = \j] in {3, 1, 2, 4, 6, 5}{
				\draw[->, color=red!70!black] (A\j.south) -- (B\i.north);
			}
	\end{tikzpicture}
	\caption{
		The left action of \(\pi = \window{3, 1, 2, 4, 6, 5}\)
		on \(\alpha = \composition{3, 1, 0, 2, 2, 0}\).
		}
	\label{fig:left-action-on-composition}
\end{figure}

\begin{remark}
	Given permutations \(\sigma, \pi \in \sym{n}\) and
	a composition \(\alpha=\composition{\alpha_1,\ldots,\alpha_n}\),
	\cite{Ale19} writes \(\pi \alpha\) to mean \(\pi \permact \alpha\),
	and writes \(\pi \sigma\) to mean \(\pi \permact \sigma\),
	interpreting \(\sigma\) as a composition,
	which we write as a product \(\pi \sigma^{-1}\).
\end{remark}

\subsection{Permuted-basement Macdonald polynomials}
\label{sec:permuted basement def}

We follow the exposition of \cite{Fer11} and the notation of \cite{Ale19}
for the definition of nonsymmetric Macdonald polynomials
and permuted-basement Macdonald polynomials.
For more background on this topic, see \cite{GR22},
where these polynomials are referred to as the \emph{relative Macdonald polynomials}.

Let \(x_1\), \dots, \(x_n\) be indeterminates, denoted collectively as \(\mathbf{x}\).
Let \(q, t\) be parameters.
We define nonsymmetric Macdonald polynomials and permuted-basement Macdonald polynomials,
which are polynomials in the ring \(\rationals(q, t)[\mathbf{x}]\).

A permutation \(\sigma \in \sym{n}\) acts on
a polynomial \(f(\mathbf{x})\) by permuting the variables,
that is,
\(\sigma (\mathbf{x}^{\alpha}) = \mathbf{x}^{\sigma \permact \alpha}\)
for a composition \(\alpha\).

The \vocab{Demazure--Lusztig operators}
\(T_1\), \dots, \(T_{n-1}\) on \(\rationals(q, t)[\mathbf{x}]\)
are defined by
\begin{equation*}
	T_i(f) = ts_i(f) + (1-t) x_{i+1} \frac{f - s_i(f)}{x_i - x_{i+1}},
\end{equation*}
for \(i \in \interval{n-1}\),
and the operator \(g\) on \(\rationals(q, t)[\mathbf{x}]\) is defined by
\begin{equation*}
	g(f(x_1, \ldots, x_n)) = f(x_2, x_3, \ldots, x_n, q^{-1}x_1).
\end{equation*}
For a reduced expression \( \sigma = s_{i_1} \cdots s_{i_k} \),
we write \(T_\sigma\) to denote the composition \(T_{i_1} \circ \cdots \circ T_{i_k}\).

The \vocab{Cherednik--Dunkl operators}
\(Y_1\), \dots, \(Y_n\) on \(\rationals(q, t)[\mathbf{x}]\)
are defined by,
for each \(i \in \interval{n}\),
\begin{equation*}
	Y_i = t^{i-1} T_{i-1}^{-1} \cdots T_1^{-1} g T_{n-1} \cdots T_i.
\end{equation*}

\begin{definition}[{\cite[Theorem 4.1.4]{Fer11}}]
	For a composition \(\alpha=\composition{\alpha_1,\ldots,\alpha_n}\),
	the \vocab{nonsymmetric Macdonald polynomial} \(E_\alpha=E_\alpha(\mathbf{x}; q, t)\) is
	the unique element of \(\rationals(q, t)[\mathbf{x}]\) such that
	the coefficient of \(\mathbf{x}^{\alpha}\) is \(1\),
	and for all \(i\in\interval{n}\),
	\begin{equation*}
		Y_i E_\alpha = q^{-\alpha_i} t^{k_i} E_\alpha,
	\end{equation*}
	where
	\(
		k_i = |\{ j \in \interval{i-1} \suchthat \alpha_j > \alpha_i \}|
			+ |\{ j \in \interval{i+1, n} \suchthat \alpha_j \geq \alpha_i \}|
	\).
\end{definition}

\begin{remark}
	We note that there is a typo in \cite{Fer11}:
	the corresponding formula for \(k_i\) has a minus sign instead of a plus sign.
	Our formulation aligns with the definition in \cite{Kno97},
	although the notation there differs.
\end{remark}

\begin{example}
	\label{example:nonsymmetric-macdonald-polynomial}
	Let \(n = 4\) and \(\alpha = \composition{1, 0, 1, 1}\).
	Then, \(k_1 = 2\), \(k_2 = 3\), \(k_3 = 1\) and \(k_4 = 0\).
	Therefore, the nonsymmetric Macdonald polynomial
	\(E_{\composition{1, 0, 1, 1}}(\mathbf{x}; q, t)\)
	is the unique element of \(\rationals(q, t)[x_1, x_2, x_3, x_4]\) such that
	the coefficient of \(x_1 x_3 x_4\) is \(1\), and
	\begin{align*}
		Y_1 E_{\composition{1, 0, 1, 1}} & = q^{-1} t^{2} E_{\composition{1, 0, 1, 1}}, &
		Y_2 E_{\composition{1, 0, 1, 1}} & = q^{0}  t^{3} E_{\composition{1, 0, 1, 1}}, \\
		Y_3 E_{\composition{1, 0, 1, 1}} & = q^{-1} t^{1} E_{\composition{1, 0, 1, 1}}, &
		Y_4 E_{\composition{1, 0, 1, 1}} & = q^{-1} t^{0} E_{\composition{1, 0, 1, 1}}.
	\end{align*}
	The unique polynomial satisfying the above is
	\begin{equation*}
		E_{\composition{1, 0, 1, 1}}(x_1, x_2, x_3, x_4; q, t)
		=
		x_1 x_2 x_3 \frac{1 - t}{1 - q t^2} +
		x_1 x_2 x_4 \frac{1 - t}{1 - q t^2} +
		x_1 x_3 x_4.
	\end{equation*}
\end{example}

The following definition is a reformulation based on \cite[Corollary 16]{Ale19} and \cite[Remark 4.4.5]{Fer11}.

\begin{definition}[Permuted-basement Macdonald polynomials]
	\label{definition:permutedbasement}
	For a composition \(\alpha=\composition{\alpha_1,\ldots,\alpha_n}\)
	and a permutation \(\sigma \in \sym{n}\),
	the \vocab{permuted-basement Macdonald polynomial}
	\(E_\alpha^\sigma = E_{\alpha}^{\sigma}(\mathbf{x}; q, t)\)
	is given by
	\begin{equation*}
		E_{\alpha}^{\sigma} =
		t^{-\twinv(\alpha, \sigma)} T_{\rev(\sigma)} E_{\rev(\alpha)},
	\end{equation*}
	where
	\(
		\twinv(\alpha, \sigma) =
		\left| \{
			(i, j) \suchthat
			i < j \text{ and }
			\alpha_i \geq \alpha_j \text{ and }
			\sigma_i < \sigma_j
		\} \right|
	\).
\end{definition}

Note that, for \(\sigma = w_0 = \window{n, n-1, \ldots, 1}\),
the permuted-basement Macdonald polynomial \(E_{\alpha}^{w_0}(\mathbf{x}; q, t)\)
is the nonsymmetric Macdonald polynomial \(E_{\rev(\alpha)}(\mathbf{x}; q, t)\).

\begin{remark}
	We note a typo in \cite{Ale19} concerning the definition of \(\twinv(\alpha, \sigma)\):
	the inequality involving \(\sigma\) is reversed.
	This error becomes apparent by considering the special case \(E_\alpha^{w_0}\).
	Our formulation aligns with the definition provided in \cite{Fer11},
	although we use different notation.
\end{remark}

\begin{remark}
	\label{remark:notation-permuted-basement-macdonald-polynomial-Ale19}
	Our notation for the permuted-basement Macdonald polynomial agrees with
	the notation in \cite{Ale19,CMW22},
	which is different from the notation in \cite{Fer11}.
	Notably, the permuted-basement Macdonald polynomial \(E_{\alpha}^{\sigma}(\mathbf{x}; q, t)\)
	is denoted by \(E_{\rev(\alpha)}^{\rev(\sigma)}(\mathbf{x};q,t)\) in \cite{Fer11}.
\end{remark}

\begin{example}
	\label{example:permuted-basement-macdonald-polynomial}
	Let \(n = 4\),
	\(\alpha = \composition{1, 1, 0, 1}\),
	and \(\sigma = \window{2, 4, 1, 3}\).
	Then, \(\rev(\sigma)=\window{3, 1, 4, 2}\) has reduced expression \(s_2 s_1 s_3\),
	so the Demazure--Lusztig operator \(T_{\rev(\sigma)}\) is given by \(T_2 T_1 T_3\).
	We compute
	\(
		\twinv(\alpha, \sigma) =
		\twinv(\composition{1, 1, 0, 1}, \window{2, 4, 1, 3}) =
		|\{(1, 2), (1, 4)\}| = 2
	\).
	Therefore,
	the permuted-basement Macdonald polynomial
	\(E_{\composition{1, 1, 0, 1}}^{\window{2, 4, 1, 3}}(\mathbf{x}; q, t)\)
	is given by
	\begin{equation*}
		E_{\composition{1, 1, 0, 1}}^{\window{2, 4, 1, 3}}(\mathbf{x}; q, t)
		=
		t^{-2}
		T_2 T_1 T_3
		E_{\composition{1, 0, 1, 1}}(\mathbf{x}; q, t),
	\end{equation*}
	which can be computed from \cref{example:nonsymmetric-macdonald-polynomial} as
	\begin{equation*}
		E_{\composition{1, 1, 0, 1}}^{\window{2, 4, 1, 3}}
		=
		x_{1} x_{2} x_{3} \frac{t(1 - t)}{1 - q t^2} +
		x_{1} x_{3} x_{4} \frac{1 - t}{1 - q t^2} +
		x_{2} x_{3} x_{4}.
	\end{equation*}
\end{example}

\begin{example}
	Let \(n = 4\),
	\(\alpha = \composition{1, 1, 0, 1}\),
	and \(\tau = \window{4, 2, 1, 3}\).
	Then, \(\rev(\tau) = \window{3, 1, 4, 2}\) has a reduced expression \(\rev(\tau) = s_2 s_1\),
	so the Demazure--Lusztig operator \(T_{\rev(\tau)}\) is given by \(T_2 T_1\).
	We compute
	\(
		\twinv(\alpha, \tau) =
		\twinv(\composition{1, 1, 0, 1}, \window{4, 2, 1, 3}) =
		|\{(1, 4)\}| = 1
	\).
	Therefore, the permuted-basement Macdonald polynomial
	\(E_{\composition{1, 1, 0, 1}}^{\window{4, 2, 1, 3}}(\mathbf{x}; q, t)\)
	is given by
	\begin{equation*}
		E_{\composition{1, 1, 0, 1}}^{\window{4, 2, 1, 3}}(\mathbf{x}; q, t)
		=
		t^{-1}
		T_2 T_1
		E_{\composition{1, 0, 1, 1}}(\mathbf{x}; q, t),
	\end{equation*}
	which can be computed as
	\begin{equation*}
		E_{\composition{1, 1, 0, 1}}^{\window{4, 2, 1, 3}}
		=
		x_{1} x_{2} x_{3} \frac{t(1 - t)}{1 - q t^2} +
		x_{1} x_{3} x_{4} \frac{1 - t}{1 - q t^2} +
		x_{2} x_{3} x_{4}.
	\end{equation*}
\end{example}

\subsection{
	Symmetry theorem for
	\texorpdfstring{\(E_\alpha^\sigma\)}{permuted-basement Macdonald polynomials}
}
\label{sec:proof-alternate}

In this section,
we establish that \cref{theorem:main} is a direct corollary
of \cref{definition:permutedbasement,theorem:symmetry-Ale19}.
This connection is not immediately apparent from
the original statement of the result in \cite[Theorem~22]{Ale19}.

\symmetryAle*

\cref{theorem:symmetry-Ale19} states that
swapping the basement entries in two adjacent columns of the same height
does not change the polynomial,
provided the entries are consecutive.
This corresponds to the permutation
\(\window{\sigma_1, \cdots, \sigma_{i-1}, \sigma_{i+1}, \sigma_i, \sigma_{i+2}, \cdots, \sigma_n} = \sigma s_i\).
Note that \cite{Ale19} writes this permutation differently,
following different conventions for permutation products and
focusing on permuting entries rather than positions.

\main*

\begin{proof}[Proof of \cref{theorem:main} as a corollary of \cref{theorem:symmetry-Ale19}]
	Since \(\alpha_i=\alpha_{i+1}\) and \((w_0)_{i+1} = (w_0)_i - 1\),
	applying \cref{theorem:symmetry-Ale19} to the permutation \(w_0\)
	gives \(E_\alpha^{w_0} = E_\alpha^{w_0 s_i}\).
	Using \cref{definition:permutedbasement},
	\begin{equation*}
		E_\alpha^{w_0} =
		E_\alpha^{w_0 s_i} =
		t^{-\twinv(\alpha, w_0 s_i)} T_{\rev(w_0 s_i)} E_\alpha^{w_0} =
		t^{-1} T_{n-i} E_\alpha^{w_0}.
	\end{equation*}

	Without loss of generality, assume \(\sigma_{i+1} = \sigma_i - 1\).
	Then, \(\ell(\sigma) > \ell(\sigma s_i)\).
	For ease of reading, let \(\tau=\rev(\sigma)\),
	so that \(\rev(\sigma s_i)=\tau s_{n-i}\).
	\cref{definition:permutedbasement} implies that
	\begin{equation*}
		E_\alpha^\sigma = t^{-\twinv(\alpha, \sigma)} T_{\tau} E_\alpha^{w_0}
		\qquad \text{and} \qquad
		E_\alpha^{\sigma s_i} = t^{-\twinv(\alpha, \sigma s_i)} T_{\tau s_{n-i}} E_\alpha^{w_0}.
	\end{equation*}
	Let \(s_{i_1}\cdots s_{i_k}\) be a reduced expression for \(\tau\).
	Then \(s_{i_1}\cdots s_{i_k}s_{n-i}\) is a reduced expression for \(\tau s_{n-i}\),
	since \(\ell(\tau) < \ell(\tau s_{n-i})\),
	and thus \(T_{\tau s_{n-i}} = T_{\tau} \circ T_{n-i}\).
	From the definition of \(\twinv(\alpha, \sigma)\), we check that
	\( \twinv(\alpha, \sigma s_i) = \twinv(\alpha, \sigma) + 1 \),
	and consequently, we compute
	\begin{equation*}
		E_\alpha^{\sigma s_i}
		= t^{-\twinv(\alpha, \sigma s_i)} T_{\tau} T_{n-i} E_\alpha^{w_0}
		= t^{-\twinv(\alpha, \sigma)} T_{\tau} \left( t^{-1} T_{n-i} E_\alpha^{w_0} \right)
		= t^{-\twinv(\alpha, \sigma)} T_{\tau} E_\alpha^{w_0}
		= E_\alpha^\sigma,
	\end{equation*}
	as desired.
\end{proof}

\subsection{Non-attacking fillings}

The \vocab{skyline diagram} of a composition
\(\alpha = \composition{\alpha_1, \ldots, \alpha_n}\)
is the set
\begin{equation*}
	\dg(\alpha) = \left\{ (i, r) \suchthat i \in \interval{n} \text{ and } r \in \interval{\alpha_i} \right\},
\end{equation*}
where the first coordinate indexes the columns and the second coordinate indexes the rows.
The \vocab{augmented skyline diagram} of a composition
\(\alpha = \composition{\alpha_1, \ldots, \alpha_n}\)
is the set
\begin{equation*}
	\adg(\alpha) = \dg(\alpha) \cup \left\{ (i, 0) \suchthat i \in \interval{n} \right\},
\end{equation*}
where an extra row is added at the bottom of the skyline diagram of \(\alpha\).
The extra row is called the \vocab{basement} of \(\adg(\alpha)\).
\Cref{fig:skyline} provides an example of a skyline diagram and an augmented skyline diagram.

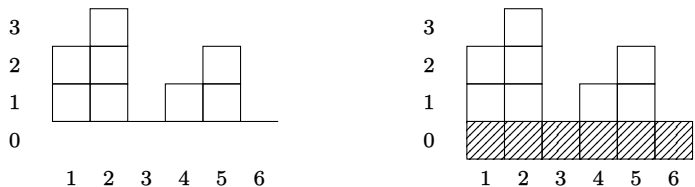
\begin{figure}[htbp]
	\centering
	\begin{tikzpicture}[x=.5cm, y=.5cm]
		\foreach \height [count = \column] in {2, 3, 0, 1, 2, 0}{
				\foreach \row in {0,...,\height}{
						\ifnum \row = 0
							\path (\column, \row) rectangle (\column + 1, \row + 1);
							\draw (\column, \row+1) -- (\column + 1, \row + 1);
						\else
							\draw[black] (\column, \row) rectangle (\column + 1, \row + 1);
						\fi
					}
				\node at (\column + .5, -.5) {\(\scriptstyle \column\)};
			}
		\foreach \row in {0, 1, 2, 3}{
				\node at (0, \row + .5) {\(\scriptstyle \row\)};
			}
	\end{tikzpicture}
	\qquad\qquad
	\begin{tikzpicture}[x=.5cm, y=.5cm]
		\foreach \height [count = \column] in {2, 3, 0, 1, 2, 0}{
				\foreach \row in {0,...,\height}{
						\ifnum \row = 0
							\draw[black,pattern=north east lines] (\column, \row) rectangle (\column + 1, \row + 1);
						\else
							\draw[black] (\column, \row) rectangle (\column + 1, \row + 1);
						\fi
					}
				\node at (\column + .5, -.5) {\(\scriptstyle \column\)};
			}
		\foreach \row in {0, 1, 2, 3}{
				\node at (0, \row + .5) {\(\scriptstyle \row\)};
			}
	\end{tikzpicture}
	\caption{
		The skyline diagram (left) and
		the augmented skyline diagram (right)
		of \(\alpha = \composition{2, 3, 0, 1, 2, 0}\).
		The basement of the augmented skyline diagram is shaded.
	}
	\label{fig:skyline}
\end{figure}

Let \(u = (i, r) \in \dg(\alpha)\).
The \vocab{left-arm set},
\vocab{right-arm set},
and \vocab{arm set} of \(u\) are defined as
\begin{align*}
	\leftArm(u) & = \left\{
		(i', r-1) \in \adg(\alpha)
		\suchthat
		i' < i \text{ and } \alpha_{i'} < \alpha_{i}
	\right\},\\
	\rightArm(u) & = \left\{
		(i', r) \in \dg(\alpha)
		\suchthat
		i' > i \text{ and } \alpha_{i'} \leq \alpha_{i}
	\right\}, \\
	\Arm(u) & = \leftArm(u) \cup \rightArm(u).
\end{align*}
The \vocab{leg statistic} and \vocab{arm statistic} of \(u\) are defined as \(\leg(u) = \alpha_i-r\)
and \(\arm(u) = \left|\Arm(u)\right|\).
The box \vocab{south} of \(u = (i, r)\) is the box \(\south{u} = (i, j-1) \in \adg(\alpha)\), that is, the box directly below \(u\).
See \cref{fig:leg-arm} for an example.

\begin{figure}[htbp]
	\begin{tikzpicture}[x=.5cm, y=.5cm]
		\def\shape{3, 2, 2, 4, 4, 0, 3, 3, 3, 4, 2, 1, 3}
		\xdef\reading{0}
		\foreach \height [count = \column] in \shape{
			\foreach \row in {0,...,\height}{
					\pgfmathparse{int(\reading + 1)}
					\xdef\reading{\pgfmathresult}
					\coordinate (B\reading) at (\column, \row);
				}
		}
		\def\filling{
			{ },{ },{ },{ },
			{ },{z},{ },
			{ },{z},{ },
			{ },{ },{ },{ },{ },
			{ },{ },{ },{ },{ },
			{ },
			{ },{ },{ },{ },
			{ },{d},{u},{x},
			{ },{ },{y},{ },
			{ },{ },{ },{ },{ },
			{ },{ },{y},
			{ },{ },
			{ },{ },{y},{ }}

		\foreach \i [count = \j] in \filling {
			\if\i u
				\draw[fill=black] (B\j) rectangle ++(1, 1);
			\else\if\i z
					\draw[pattern color=magenta, pattern=north east lines] (B\j) rectangle ++(1, 1);
				\else\if\i x
						\draw[pattern color=green!70!black, pattern=vertical lines] (B\j) rectangle ++(1, 1);
					\else\if\i y
							\draw[pattern color=blue, pattern=north west lines] (B\j) rectangle ++(1, 1);
						\else\if\i d
								\draw[pattern color=yellow!50!black, pattern=horizontal lines] (B\j) rectangle ++(1, 1);
							\else
							\draw[fill=white] (B\j) rectangle ++(1, 1);
						\fi\fi\fi\fi\fi
		}

	\end{tikzpicture}
	\caption{
		We show set of boxes counted by \(\leg(u)\) (\,\raisebox{-.7ex}{\begin{tikzpicture}[x=.45cm, y=.45cm] \protect\draw[pattern color=green!70!black, pattern=vertical lines] (0, 0) rectangle ++(1, 1); \end{tikzpicture}}\,),
		the left-arm set (\,\raisebox{-.7ex}{\begin{tikzpicture}[x=.45cm, y=.45cm] \protect\draw[pattern color=magenta, pattern=north east lines] (0, 1) rectangle ++(1, 1); \end{tikzpicture}}\,),
		the right-arm set (\,\raisebox{-.7ex}{\begin{tikzpicture}[x=.45cm, y=.45cm] \protect\draw[pattern color=blue, pattern=north west lines] (3, 4) rectangle ++(1, 1); \end{tikzpicture}}\,),
		and the south (\,\raisebox{-.7ex}{\begin{tikzpicture}[x=.45cm, y=.45cm] \protect\draw[pattern color=yellow!50!black, pattern=horizontal lines] (1, 4) rectangle ++(1, 1); \end{tikzpicture}}\,)
		of a box \(u = (8, 2)\) (\,\raisebox{-.7ex}{\begin{tikzpicture}[x=.45cm, y=.45cm] \protect\draw[fill=black] (5, 2) rectangle ++(1, 1); \end{tikzpicture}}\,)
		in the augmented skyline diagram of \(\alpha = \composition{3, 2, 2, 4, 4, 0, 3, 3, 3, 4, 2, 1, 3}\).
		Then \(\leg(u)=1\) and \(\arm(u)=5\).
	}
	\label{fig:leg-arm}
\end{figure}

Two boxes in the augmented diagram are \vocab{attacking} if they are in the same row, or if they are in consecutive rows
and the boxes in the higher row is strictly to the right of the box in the lower row.
In other words, two boxes are attacking if they are in one of the configurations shown in \cref{fig:attacking}.

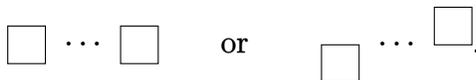
\begin{figure}[htbp]
	\begin{equation*}
		\vcenter{\hbox{
				\begin{tikzpicture}[x=.5cm, y=.5cm]
					\draw[black] (0, 0) rectangle (1, 1);
					\node at (2, .5) {\(\cdots\)};
					\draw[black] (3, 0) rectangle (4, 1);
				\end{tikzpicture}}}
		\qquad \text{or} \qquad
		\vcenter{\hbox{
				\begin{tikzpicture}[x=.5cm, y=.5cm]
					\draw[black, shift={(0,-.5)}] (0, 0) rectangle (1, 1);
					\node at (2, .5) {\(\cdots\)};
					\draw[black, shift={(0,.5)}] (3, 0) rectangle (4, 1);
				\end{tikzpicture}}}.
	\end{equation*}
	\caption{Configuration of attacking boxes in a same row (left) and in consecutive rows (right).}
	\label{fig:attacking}
\end{figure}

A \vocab{filling} of shape \(\alpha=\composition{\alpha_1,\ldots,\alpha_n}\) and basement \(\sigma \in \sym{n}\) is
a function \(T \colon \adg(\alpha) \to \interval{n}\) such that \(T(i, 0) = \sigma_i\) for all \(i \in \interval{n}\).
Let \(\Fill(\alpha, \sigma)\) denote the set of fillings of shape \(\alpha\) and basement \(\sigma\).
A filling \(T\in F(\alpha,\sigma)\)
is a \vocab{non-attacking}
if \(T(u_1) \neq T(u_2)\) for all pairs of attacking boxes \(u_1\) and \(u_2\) in \(\adg(\alpha)\).
Let \(\NAF(\alpha, \sigma)\) denote the set of non-attacking fillings of shape \(\alpha\) and basement \(\sigma\).
See \Cref{fig:non-attacking-filling} for an example.

\begin{figure}[htbp]
	\begin{tikzpicture}[x=.5cm, y=.5cm]
		\def\shape{2, 2, 0, 1}
		\def\filling{
			3, 1, 2,
			1, 2, 4,
			2,
			4, 4}
		\edef\reading{0}
		\foreach \height [count = \column] in \shape{
			\foreach \row in {0,...,\height}{
					\pgfmathparse{int(\reading + 1)}
					\xdef\reading{\pgfmathresult}
					\draw[black] (\column, \row) rectangle (\column + 1, \row + 1);
					\coordinate (B\reading) at (\column + .5, \row + .5);
				}
		}
		\foreach \i [count = \j] in \filling{ \node at (B\j) {\i}; }
	\end{tikzpicture}
	\caption{
		A non-attacking filling of
		shape \(\alpha = \composition{2, 2, 0, 1}\),
		basement \(\sigma = \window{3, 1, 2, 4}\),
		and \(\content(T) = \composition{1, 2, 0, 2}\).
	}
	\label{fig:non-attacking-filling}
\end{figure}

The \vocab{content} of a filling \(T \in \NAF(\alpha, \sigma)\)
is the composition
\(\content(T) = \composition{\beta_1, \beta_2, \ldots, \beta_n}\),
where \(\beta_i = |\{u\in\dg(\alpha) : T(u)=i\}|\)
is the number of boxes of \(T\) with label \(i\).
Note that boxes in the basement do not contribute to the content.
Let \(\NAF(\alpha, \sigma, \beta)\) denote
the set of non-attacking fillings
of shape \(\alpha\),
basement \(\sigma\),
and content \(\beta\).

Given \(a, b \in \interval{n}\),
let \(\ind{a, b}\) be the indicator function of \(a > b\),
that is,
\begin{equation*}
	\ind{a, b} =
	\begin{cases}
		1 & \text{if } a > b, \\
		0 & \text{otherwise}.
	\end{cases}
\end{equation*}
Given a non-attacking filling \(T \in \NAF(\alpha, \sigma)\),
we say that a box \(u \in \dg(\alpha)\)
is a \vocab{descent} in \(T\) if \(T(u) > T(\south{u})\).
Let \(\Des(T) \subseteq \dg(\alpha)\) denote the set of descents in \(T\).
The \vocab{major index} of \(T\) is
\begin{equation*}
	\maj(T)
	= \sum_{u \in \Des(T)} \leg(u) + 1.
\end{equation*}
See \cref{fig:major-index} for an example.
\begin{figure}[htbp]
	\begin{tikzpicture}[x=.5cm, y=.5cm]
		\def\shape{2, 2, 0, 1}
		\def\filling{
			3, 1, \textbf2,
			1, \textbf2, \textbf4,
			2,
			4, 4}
		\xdef\reading{0}
		\fill[black!20] (1,2) rectangle ++(1,1);
		\fill[black!20] (2,1) rectangle ++(1,1);
		\fill[black!20] (2,2) rectangle ++(1,1);
		\foreach \height [count = \column] in \shape{
			\foreach \row in {0,...,\height}{
					\pgfmathparse{int(\reading + 1)}
					\xdef\reading{\pgfmathresult}
					\draw[black] (\column, \row) rectangle (\column + 1, \row + 1);
					\coordinate (B\reading) at (\column + .5, \row + .5);
				}
		}
		\foreach \i [count = \j] in \filling{
			\node at (B\j) {\i};
		}
	\end{tikzpicture}
	\caption{
		The filling \(T\) above has three descents (shaded boxes).
		The leg statistics of the descents are: \(\leg(1,2)=\leg(2,2)=0\) and \(\leg(2,1)=1\).
		Thus, the major index of \(T\) is \(\maj(T) = 1 + 1 + 2 = 4\).
	}
	\label{fig:major-index}
\end{figure}

A \vocab{triple} is an ordered tuple \((u, v, w)\) of boxes in \(\adg(\alpha)\)
such that \(w = \south{u}\) and \(v \in \Arm(u)\).
If \(v \in \rightArm(u)\),
then the triple is of \vocab{type I},
and if \(v \in \leftArm(u)\),
then the triple is of \vocab{type II}.
The configurations of the boxes \(u\), \(v\), and \(w\)
when the tuple \((u, v, w)\) is of type I or type 2 are shown in \cref{fig:triples}.
Note that if \(\alpha\) is a partition, there are no triples of type II.
\begin{figure}[ht]
	\centering
	\begin{subfigure}[t]{0.45\textwidth}
		\centering
		\begin{ytableau}
			u & \none[\cdots] & v \\
			w
		\end{ytableau}
		\caption*{(\textsc{Type I})
		The column containing \(u\) and \(w\) is at least as high as the column containing \(v\).}
	\end{subfigure}\hspace{0.05\textwidth}
	\begin{subfigure}[t]{0.45\textwidth}
		\centering
		\begin{ytableau}
			\none & \none & u \\
			v & \none[\cdots] & w
		\end{ytableau}
		\caption*{(\textsc{Type II})
		The column containing \(u\) and \(w\) is strictly higher than the column containing \(v\).}
	\end{subfigure}
	\caption{Configurations for triples of type I (left) and type II (right).}
	\label{fig:triples}
\end{figure}

We define the \vocab{inversion} and \vocab{coinversion} statistics,
following \cite[Section~3]{HHL08}.
Given \(a, b, c \in \interval{n}\), such that \(a \neq b\) and \(b \neq c\), define
\begin{equation*}
	\ind{a, b, c} = \ind{a, b} + \ind{b, c} - \ind{a, c} \in \{0, 1\}.
\end{equation*}
Note that \(\ind{a,b,c}\in\{0,1\}\)
since it is not possible for \(\ind{a,b}=\ind{b,c}=1-\ind{a,c}\).

Equivalently,
\(\ind{a, b, c}\) is the indicator function of the statement
that \(a, b, c\) is cyclically weakly decreasing, that is,
\(\ind{a, b, c} = 1\) if and only if
\begin{equation*}
	a > b > c \quad \text{or} \quad b > c \geq a \quad \text{or} \quad c \geq a > b.
\end{equation*}
\Cref{fig:indicator} provides examples of the indicator function \(\ind{a, b, c}\).

\begin{figure}[htbp]
	\begin{tikzpicture}
		\draw[black] (0, 0) circle (1);
		\node[circle, fill=white] at (90:1) {\(3\)};
		\node[circle, fill=white] at (-30:1) {\(2\)};
		\node[circle, fill=white] at (-150:1) {\(1\)};
		\draw[->, green!60!black] (90:.6) arc (90:-150:.6);
		\node[green!60!black] at (0, -1.5) {\(\ind{3, 2, 1} = 1\)};
	\end{tikzpicture}
	\begin{tikzpicture}
		\draw[black] (0, 0) circle (1);
		\node[circle, fill=white] at (90:1) {\(1\)};
		\node[circle, fill=white] at (-30:1) {\(3\)};
		\node[circle, fill=white] at (-150:1) {\(2\)};
		\draw[->, green!60!black, rotate=-120] (90:.6) arc (90:-150:.6);
		\node[green!60!black] at (0, -1.5) {\(\ind{1, 3, 2} = 1\)};
	\end{tikzpicture}
	\begin{tikzpicture}
		\draw (0, 0) circle (1);
		\node[circle, fill=white] at (90:1) {\(2\)};
		\node[circle, fill=white] at (-30:1) {\(3\)};
		\node[circle, fill=white] at (-150:1) {\(2\)};
		\draw[->, green!60!black, rotate=-120] (90:.6) arc (90:-150:.6);
		\node[green!60!black] at (0, -1.5) {\(\ind{2, 3, 2} = 1\)};
	\end{tikzpicture}
	\begin{tikzpicture}
		\draw[black] (0, 0) circle (1);
		\node[circle, fill=white] at (90:1) {\(2\)};
		\node[circle, fill=white] at (-30:1) {\(1\)};
		\node[circle, fill=white] at (-150:1) {\(2\)};
		\draw[->, green!60!black, rotate=120] (90:.6) arc (90:-150:.6);
		\node[green!60!black] at (0, -1.5) {\(\ind{2, 1, 2} = 1\)};
	\end{tikzpicture}
	\begin{tikzpicture}
		\draw[black] (0, 0) circle (1);
		\node[circle, fill=white] at (90:1) {\(1\)};
		\node[circle, fill=white] at (-30:1) {\(2\)};
		\node[circle, fill=white] at (-150:1) {\(3\)};
		\draw[->, red!60!black] (105:.3) arc (105:-180:.3);
		\draw[red!60!black] (.3,.3) -- (-.3,-.3);
		\draw[red!60!black] (.3,-.3) -- (-.3,.3);
		\node[red!60!black] at (0, -1.5) {\(\ind{1, 2, 3} = 0\)};
	\end{tikzpicture}
	\caption{Examples of the indicator function \(\ind{a, b, c}\).}
	\label{fig:indicator}
\end{figure}
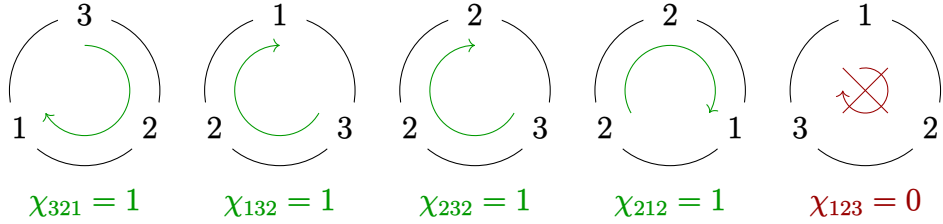

Given a non-attacking filling \(T \in \NAF(\alpha, \sigma)\),
a triple \((u, v, w)\) in \(\adg(\alpha)\) is
an \vocab{inversion triple} if \(\ind{T(u), T(v), T(w)} = 1\),
and is a \vocab{coinversion triple} otherwise.
Note that \(T(v) \neq T(u)\) and \(T(v) \neq T(w)\)
since \(v\) attacks both \(u\) and \(w\).
Let \(\inv(T)\) and \(\coinv(T)\) denote
the number of inversion triples and coinversion triples in \(T\),
respectively.
It is straightforward to check that these definitions of inversion triples
are equivalent to those in \cite{Ale19} and \cite{CMW22}.
See \cref{fig:inversion-coinversion} for an example.

\begin{figure}[htbp]
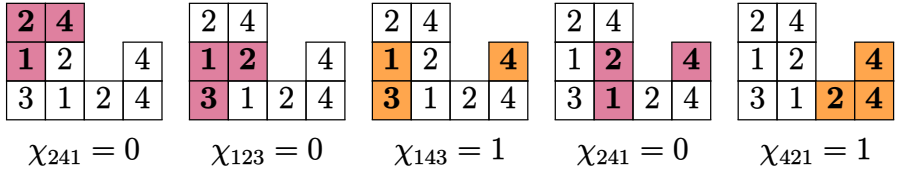

	\ytableausetup{boxsize=0.5cm}
	\colorlet{inversion}{orange!70!white}
	\colorlet{coinversion}{purple!50!white}
	\renewcommand{\arraystretch}{1.5}
	\begin{tabular}{ccccc}
		\begin{ytableau}
			*(coinversion) \textbf2 & *(coinversion) \textbf4 \\
			*(coinversion) \textbf1 & 2 & \none & 4 \\
			3 & 1 & 2 & 4
		\end{ytableau}
		&
		\begin{ytableau}
			2 & 4 \\
			*(coinversion) \textbf1 & *(coinversion) \textbf2 & \none & 4 \\
			*(coinversion) \textbf3 & 1 & 2 & 4
		\end{ytableau}
		&
		\begin{ytableau}
			2 & 4 \\
			*(inversion) \textbf1 & 2 & \none & *(inversion) \textbf4 \\
			*(inversion) \textbf3 & 1 & 2 & 4
		\end{ytableau}
		&
		\begin{ytableau}
			2 & 4 \\
			1 & *(coinversion) \textbf2 & \none & *(coinversion) \textbf4 \\
			3 & *(coinversion) \textbf1 & 2 & 4
		\end{ytableau}
		&
		\begin{ytableau}
			2 & 4 \\
			1 & 2 & \none & *(inversion) \textbf4 \\
			3 & 1 & *(inversion) \textbf2 & *(inversion) \textbf4
		\end{ytableau}
		\\
		\(\ind{2, 4, 1} = 0\)
		&
		\(\ind{1, 2, 3} = 0\)
		&
		\(\ind{1, 4, 3} = 1\)
		&
		\(\ind{2, 4, 1} = 0\)
		&
		\(\ind{4, 2, 1} = 1\)
	\end{tabular}
	\caption{
		We show all five triples of
		the non-attacking filling \(T\) in \cref{fig:non-attacking-filling};
		the first four are of type I, and the last one is of type II.
		There are two inversion triples (orange)
		and three coinversion triples (purple),
		which gives \(\inv(T) = 2\) and \(\coinv(T) = 3\).
	}
	\label{fig:inversion-coinversion}
\end{figure}

\subsection{Tableau formula for permuted-basement Macdonald polynomials}
\label{sec:tableau-formula}

For a non-attacking filling \(T \in \NAF(\alpha, \sigma)\),
define the \(\mathbf{x}\)-weight as
\begin{equation*}
	\mathbf{x}^T = \mathbf{x}^{\content(T)} = \prod_{u \in \dg(\alpha)} x_{T(u)},
\end{equation*}
the \(q, t\)-weight as
\begin{equation*}
	\wtqt(T)
	=
	q^{\maj(T)} t^{\coinv(T)}
	\prod_{\substack{u \in \dg(\alpha) \\ T(u) \neq T(\south{u})}}
	\frac{1 - t}{1 - q^{1 + \leg(u)} t^{1 + \arm(u)}},
\end{equation*}
and the weight as
\begin{equation*}
	\wt(T) = \mathbf{x}^T \wtqt(T).
\end{equation*}

The \(q,t\)-weight of a filling is the product of the weights
contributed by the descents
within each row and the inversions between every pair of rows.
Let \(L = \max_i \alpha_i\) be the total number of rows.
For \(0\leq r\leq L-1\),
define \(\dg[r]{(\alpha)}\) as
the set of boxes in the \(r\)\textsuperscript{th} row of \(\dg(\alpha)\).
Then, define
\begin{equation*}
	\maj[r + 1](T) = \sum_{u \in \Des[r + 1](T)} \leg(u) + 1,
\end{equation*}
where \(\Des[r+1](T)=\Des(T)\cap \dg[r+1]{(\alpha)}\).
Also, define \(\coinv[r, r+1](T)\) as
the number of inversion triples in \(T\)
within the pair of rows \(r\) and \(r + 1\).
Then, the \(r\)\textsuperscript{th} component
of the \(q, t\)-weight
of a non-attacking filling \(T \in \NAF(\alpha, \sigma)\)
is defined as
\begin{equation}
	\label{eq:rthcomponent}
	\wtqt[r](T) =
	q^{\maj[r + 1](T)}
	t^{\coinv[r, r + 1](T)}
	\prod_{\substack{u \in \dg[r + 1]{(\alpha)} \\ T(u) \neq T(\south{u})}}
	\frac{1 - t}{1 - q^{1 + \leg(u)} t^{1 + \arm(u)}},
\end{equation}
yielding the decomposition
\begin{equation*}
	\wtqt(T) = \prod_{r = 0}^{L-1} \wtqt[r](T).
\end{equation*}

\begin{example}
	Consider the non-attacking filling \(T\) in \cref{fig:non-attacking-filling}.
	The \(\mathbf{x}\)-weight of \(T\) is
	\begin{equation*}
		\mathbf{x}^T = x_1x_2^2x_4^2.
	\end{equation*}
	The \(0\)\textsuperscript{th} and \(1\)\textsuperscript{st} components
	of the \(q, t\)-weight of \(T\) are
	\begin{equation*}
		\wtqt[0](T) = \frac{ q^2 t^2 (1-t)^2 }{ (1 - q^2t^3) (1 - q^2t^2)},
		\qquad \text{ and } \qquad
		\wtqt[1](T) = \frac{ q^2 t^1 (1-t)^2 }{ (1 - qt^2) (1 - qt)}.
	\end{equation*}
	Then the \(q, t\)-weight of \(T\) is \(\wtqt(T)=\wtqt[0](T)\wtqt[1](T)\),
	and the weight of \(T\) is
	\begin{equation*}
		\wt(T) = x_1x_2^2x_4^2 \frac{ q^4 t^3 (1-t)^4 }{ (1 - q^2t^3) (1 - q^2t^2) (1 - qt^2) (1 - qt)}.
	\end{equation*}
\end{example}

\begin{theorem}[{\cite[Definition~4.4.2]{Fer11}}]
	\label{thm:tableau-formula}
	For \(\alpha=\composition{\alpha_1,\ldots,\alpha_n}\) and \(\sigma\in\sym{n}\),
	the permuted-basement Macdonald polynomial \(E_\alpha^\sigma\) is given by
	\begin{equation*}
		E^{\sigma}_{\alpha}(\mathbf{x}; q, t) = \sum_{T \in \NAF(\alpha, \sigma)} \wt(T).
	\end{equation*}
\end{theorem}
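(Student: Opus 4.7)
The plan is to prove the formula by induction on $\binom{n}{2}-\ell(\sigma)$, the ``distance'' from the longest permutation $w_0$. Setting
$$\tilde E_\alpha^\sigma := \sum_{T\in\NAF(\alpha,\sigma)}\wt(T),$$
the goal is to show $E_\alpha^\sigma=\tilde E_\alpha^\sigma$ for every $\sigma\in\sym{n}$. The base case is $\sigma=w_0$: here $\rev(w_0)=\id$ and $\twinv(\alpha,w_0)=0$, so \cref{definition:permutedbasement} collapses to $E_\alpha^{w_0}=E_{\rev(\alpha)}$, the ordinary nonsymmetric Macdonald polynomial of shape $\rev(\alpha)$. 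The identity $E_{\rev(\alpha)}=\tilde E_\alpha^{w_0}$ is the Haglund--Haiman--Loehr combinatorial formula for nonsymmetric Macdonald polynomials \cite{HHL08}, after one verifies that the basement convention $w_0=\window{n,n-1,\ldots,1}$ and the statistics $\maj,\coinv,\leg,\arm$ used here match theirs under the reflection $\alpha\leftrightarrow\rev(\alpha)$.

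For the inductive step, fix $\sigma\neq w_0$ and assume the formula for every $\sigma^*$ with $\ell(\sigma^*)>\ell(\sigma)$. Pick any reduced expression $\rev(\sigma)=s_{j_1}\cdots s_{j_k}$ and set $\sigma^*=s_{j_1}\sigma$; then $\rev(\sigma^*)=s_{j_2}\cdots s_{j_k}$ is reduced, $\ell(\sigma^*)=\ell(\sigma)+1$, and $T_{\rev(\sigma)}=T_{j_1}\circ T_{\rev(\sigma^*)}$. Unfolding \cref{definition:permutedbasement} yields
$$E_\alpha^\sigma = t^{\twinv(\alpha,\sigma^*)-\twinv(\alpha,\sigma)}\,T_{j_1}\,E_\alpha^{\sigma^*}.$$
Substituting the inductive hypothesis $E_\alpha^{\sigma^*}=\tilde E_\alpha^{\sigma^*}$ reduces the theorem to a single \emph{key lemma}: for every $\tau\in\sym{n}$ and every $j\in\interval{n-1}$ such that $j+1$ precedes $j$ in the one-line notation of $\tau$,
$$T_j\,\tilde E_\alpha^\tau = t^{\twinv(\alpha,s_j\tau)-\twinv(\alpha,\tau)}\,\tilde E_\alpha^{s_j\tau}.$$

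The main obstacle is proving this key lemma. My approach is a term-by-term analysis of
$$T_j = t\,s_j + (1-t)\,x_{j+1}(x_j-x_{j+1})^{-1}(1-s_j)$$
applied to $\wt(T)=\mathbf{x}^T\wtqt(T)$ summed over $T\in\NAF(\alpha,\tau)$. The $t\,s_j$ piece globally swaps the roles of the labels $j$ and $j+1$ inside a filling, which on ``generic'' fillings realizes a weight-preserving bijection $\NAF(\alpha,\tau)\to\NAF(\alpha,s_j\tau)$; the divided-difference piece $(1-t)\,x_{j+1}(x_j-x_{j+1})^{-1}(1-s_j)$ then corrects the boundary cases in which this naive swap either breaks the non-attacking property or changes $\wtqt$, by producing ``intermediate'' fillings with some occurrences of $j$ and $j+1$ re-located between adjacent rows. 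To keep the bookkeeping manageable, I would use the row-by-row decomposition $\wtqt(T)=\prod_r \wtqt[r](T)$ from \eqref{eq:rthcomponent}, which separates the effect of a swap on descents and legs (within a row) from its effect on inversion triples (between adjacent rows). The $t^{\twinv}$ discrepancy on the right-hand side of the key lemma should then trace cleanly to the single pair of basement columns carrying $j$ and $j+1$, whose contribution to $\twinv$ is the only one affected by the swap.
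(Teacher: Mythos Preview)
The paper does not prove \cref{thm:tableau-formula}; it is stated as a citation to \cite[Definition~4.4.2]{Fer11} and used as a black box. So there is no proof in the paper to compare your proposal against.

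That said, your outline is structurally the right one and is indeed how Ferreira proceeds: the base case $\sigma=w_0$ is the Haglund--Haiman--Loehr formula for $E_{\rev(\alpha)}$, and the inductive step reduces to the ``key lemma'' describing how a single Demazure--Lusztig operator $T_j$ acts on the combinatorial sum $\tilde E_\alpha^\tau$. The reduction you give is correct.

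The gap is that your sketch of the key lemma is not a proof, and the approach you describe underestimates the difficulty. The global label swap $j\leftrightarrow j+1$ does take $\NAF(\alpha,\tau)$ to $\NAF(\alpha,s_j\tau)$ and preserves the non-attacking condition, but it does \emph{not} preserve $\wtqt$ even on ``generic'' fillings: every descent and every inversion triple involving both a $j$ and a $j+1$ can flip. The correction coming from the divided-difference piece $(1-t)\,x_{j+1}(x_j-x_{j+1})^{-1}(1-s_j)$ is therefore not a boundary phenomenon localised to a few bad fillings; it interacts with essentially every filling in which $j$ and $j+1$ both occur, and the cancellation that produces the clean answer $t^{\Delta\twinv}\tilde E_\alpha^{s_j\tau}$ is delicate. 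The row-by-row factorisation in \eqref{eq:rthcomponent} does not obviously help here, since swapping labels $j$ and $j+1$ is a global operation on the filling, not a local one on two adjacent rows. Ferreira's proof (and the analogous argument in \cite{HHL08} for the basement-free case) handles this with a carefully engineered bijection and substantial case analysis; what you have written is a plan for where to start, not a proof.
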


\begin{example}
	Let \(n = 4\), \(\alpha = \composition{1, 1, 0, 1}\) and \(\sigma = \window{2, 4, 1, 3}\).
	The four non-attacking fillings in \(\NAF(\alpha, \sigma)\)
	and their weights are shown in \cref{fig:fillings-1101-2413}.
	Using \cref{thm:tableau-formula}, the permuted-basement Macdonald polynomial
	\(E_{\composition{1, 1, 0, 1}}^{\window{2, 4, 1, 3}}(\mathbf{x}; q, t)\)
	is the sum of the weights of these fillings,
	agreeing with the computation in \cref{example:permuted-basement-macdonald-polynomial}.
	\begin{figure}[htbp]
		\ytableausetup{boxsize=0.5cm}
		\renewcommand{\arraystretch}{2}
		\setlength{\tabcolsep}{1em}
		\begin{tabular}{cccc}
			\begin{ytableau}
				2 & 4 & \none & 3 \\
				2 & 4 & 1 & 3 \\
			\end{ytableau}
			&
			\begin{ytableau}
				1 & 4 & \none & 3 \\
				2 & 4 & 1 & 3 \\
			\end{ytableau}
			&
			\begin{ytableau}
				4 & 1 & \none & 3 \\
				2 & 4 & 1 & 3 \\
			\end{ytableau}
			&
			\begin{ytableau}
				2 & 1 & \none & 3 \\
				2 & 4 & 1 & 3 \\
			\end{ytableau}
			\\
			\(x_2x_3x_4\)
			&
			\(x_1x_3x_4 \frac{1 - t}{1 - qt^3}\)
			&
			\(x_1x_3x_4 \frac{qt^2 (1 - t)^2}{(1 - q t^3)(1 - q t^2)}\)
			&
			\(x_1x_2x_3 \frac{t (1-t)}{1-qt^2}\)
		\end{tabular}
		\caption{All fillings in \(\NAF(\composition{1, 1, 0, 1}, \window{2, 4, 1, 3})\) and their weights.}
		\label{fig:fillings-1101-2413}
	\end{figure}
\end{example}

\subsection{Probabilistic bijections}

Probabilistic bijections,
also referred to as \emph{bijectivization} or \emph{coupling} by \cite{BP19},
generalize weight-preserving bijections.
We follow the exposition given in~\cite{AF22}.

\begin{definition}[Probability map]
	Let \(\algebra\) be an algebra.
	Let \(\mathbf{T}\) and \(\mathbf{U}\) be sets.
	A \vocab{probability map} from \(\mathbf{T}\) to \(\mathbf{U}\) is
	a function \(\prob{} \colon \mathbf{T} \times \mathbf{U} \to \algebra\) such that
	\begin{equation*}
		\sum_{U \in\mathbf{U}} \prob{}(T, U) = 1
	\end{equation*}
	for every \(T \in \mathbf{T}\).
\end{definition}

Note that these do not formally correspond to probability maps
in the sense of probability theory
because the values of \(\prob{}\)
are not required to be in the interval \([0, 1]\) or even real numbers.

\begin{definition}[Probabilistic bijection {\cite{AF22}}]\label{def:prob}
	Let \(\algebra\) be an algebra.
	Let \(\mathbf{T}\) and \(\mathbf{U}\) be finite sets
	equipped with weight functions
	\(\wt_{\mathbf{T}} \colon \mathbf{T} \to \algebra\) and
	\(\wt_{\mathbf{U}} \colon \mathbf{U} \to \algebra\).
	A \vocab{probabilistic bijection} between
	\( (\mathbf{T}, \wt_{\mathbf{T}}) \) and
	\( (\mathbf{U}, \wt_{\mathbf{U}}) \) is
	a pair of probability maps
	\(\prob{\mathbf{T}} \colon \mathbf{T} \times \mathbf{U} \to \algebra\) and
	\(\prob{\mathbf{U}} \colon \mathbf{U} \times \mathbf{T} \to \algebra\)
	such that
	\begin{equation*}
		\wt_{\mathbf{T}}(T) \prob{\mathbf{T}}(T, U)
		= \wt_{\mathbf{U}}(U) \prob{\mathbf{U}}(U, T)
	\end{equation*}
	for every \(T \in \mathbf{T}\) and \(U \in \mathbf{U}\).
\end{definition}

A weight-preserving bijection \( f \colon \mathbf{T} \to \mathbf{U} \)
canonically induces a probabilistic bijection
between \((\mathbf{T}, \wt_{\mathbf{T}})\) and \((\mathbf{U}, \wt_{\mathbf{U}})\)
by setting
\begin{equation*}
	\prob{\mathbf{T}}(T, U) = \prob{\mathbf{U}}(U, T) =
	\begin{cases}
		1 & \text{if } f(T) = U, \\
		0 & \text{otherwise},
	\end{cases}
\end{equation*}
explaining the choice of terminology.
Moreover, as shown in \cref{proposition:equal-weight-sum},
this generalization retains a key property of weight-preserving bijections:
the sum of the weights of \(\mathbf{T}\) and of \(\mathbf{U}\) are equal.

\begin{proposition}
	\label{proposition:equal-weight-sum}
	Let \(\prob{\mathbf{T}} \colon \mathbf{T} \times \mathbf{U} \to \algebra\)
	and \(\prob{\mathbf{U}} \colon \mathbf{U} \times \mathbf{T} \to \algebra\)
	form a probabilistic bijection between
	\( (\mathbf{T}, \wt_{\mathbf{T}}) \) and
	\( (\mathbf{U}, \wt_{\mathbf{U}}) \).
	Then,
	\begin{equation*}
		\sum_{T \in \mathbf{T}} \wt(T) = \sum_{U \in\mathbf{U}} \wt(U).
	\end{equation*}
\end{proposition}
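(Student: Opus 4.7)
The plan is to carry out a two-line calculation using the two defining properties of a probabilistic bijection: the normalization of each probability map and the weight-compatibility relation. There is no serious obstacle here; the only thing to be careful about is keeping the sums finite so that swapping the order of summation is valid, which is guaranteed by the hypothesis that \(\mathbf{T}\) and \(\mathbf{U}\) are finite sets.

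Concretely, I would start from \(\sum_{T\in\mathbf{T}} \wt_{\mathbf{T}}(T)\) and insert the factor \(\sum_{U\in\mathbf{U}} \prob{\mathbf{T}}(T,U) = 1\), which holds since \(\prob{\mathbf{T}}\) is a probability map. This yields
\begin{equation*}
	\sum_{T\in\mathbf{T}} \wt_{\mathbf{T}}(T)
	= \sum_{T\in\mathbf{T}} \sum_{U\in\mathbf{U}} \wt_{\mathbf{T}}(T)\, \prob{\mathbf{T}}(T,U).
\end{equation*}
Next, I would apply the probabilistic bijection identity \(\wt_{\mathbf{T}}(T)\prob{\mathbf{T}}(T,U) = \wt_{\mathbf{U}}(U)\prob{\mathbf{U}}(U,T)\) termwise inside the double sum, then swap the order of summation (justified by finiteness) and factor \(\wt_{\mathbf{U}}(U)\) out of the inner sum, obtaining
\begin{equation*}
	\sum_{U\in\mathbf{U}} \wt_{\mathbf{U}}(U) \sum_{T\in\mathbf{T}} \prob{\mathbf{U}}(U,T)
	= \sum_{U\in\mathbf{U}} \wt_{\mathbf{U}}(U),
\end{equation*}
where the final equality uses that \(\prob{\mathbf{U}}\) is a probability map.

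In short, the proof is a symmetric ``insert a \(1\), apply the relation, collapse the other \(1\)'' argument; the entire content lies in the definition of probabilistic bijection, and no additional machinery is required.
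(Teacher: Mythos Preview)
Your proof is correct and follows essentially the same approach as the paper: insert the normalization \(\sum_{U}\prob{\mathbf{T}}(T,U)=1\), apply the detailed-balance relation termwise, swap the order of summation, and collapse using \(\sum_{T}\prob{\mathbf{U}}(U,T)=1\). The paper presents exactly this chain of equalities (without your explicit remark on finiteness), so there is nothing to add.
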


\begin{proof}
	We compute
	\begin{align*}
		\sum_{T \in \mathbf{T}} \wt_{\mathbf{T}}(T)
		&= \sum_{T \in \mathbf{T}} \sum_{U \in \mathbf{U}}
			\wt_{\mathbf{T}}(T) \prob{\mathbf{T}}(T, U) \\
		&= \sum_{U \in \mathbf{U}} \sum_{T \in \mathbf{T}}
			\wt_{\mathbf{U}}(U) \prob{\mathbf{U}}(U, T)
		= \sum_{U \in \mathbf{U}} \wt_{\mathbf{U}}(U). \qedhere
	\end{align*}
\end{proof}

A Markov-theoretical interpretation of a probabilistic bijection is that
it defines a bipartite Markov chain on the state space \( \mathbf{T} \cup \mathbf{U} \)
with transition probabilities given by the function \( \prob{} \) under a specific balance condition.
In the case of \cref{def:prob}, this condition is detailed balance,
but a probabilistic bijection can also be defined using a more general balance condition.

\section{A probabilistic bijection for non-attacking fillings}
\label{sec:probabilistic}

In this section,
we define a map on non-attacking fillings
that allows us to construct a probabilistic bijection between
\(\NAF(\alpha,\sigma)\) and \(\NAF(\alpha,\sigma s_i)\)
in order to prove \cref{theorem:main} in \cref{sec:proof}.

In \cite{CHMMW22},
a weight-preserving bijection was established between
fillings in \(\Fill(\lambda)\) with statistics \(\maj\) and \(\inv\),
corresponding to swapping two adjacent entries in the bottom row within columns of the same height.
This was done using an operator \(\tau_i\),
originally introduced in \cite{MR04},
on fillings in \(\Fill(\lambda)\).
The bijection established the equality of generating functions
over weighted fillings for the sets
\(\{T\in\Fill(\lambda) \suchthat \text{bottom row of \(T\) is \(w\)}\}\)
and
\(\{T\in\Fill(\lambda) \suchthat \text{bottom row of \(T\) is \(w s_i\)}\}\)
for an index \(i\) such that \(\lambda_i = \lambda_{i+1}\).
The result was used to obtain a compact formula for the modified Macdonald polynomials.

In our case,
proving that \(E_\alpha^\sigma=E_\alpha^{\sigma s_i}\) combinatorially
requires showing that the weight generating functions
over \(\NAF(\alpha,\sigma)\) and \(\NAF(\alpha,\sigma s_i)\) are equal,
which also amounts to finding a bijection on tableaux
that swaps basement entries in columns \(i\) and \(i+1\).
However,
the operator \(\tau_i\) used in \cite{CHMMW22}
does not preserve the non-attacking property of tableaux,
so the same strategy could not be applied.
This issue was resolved in \cite{Man24}
by introducing a probabilistic operator \(\widehat{\tau}_i\)
for non-attacking tableaux with partition shape,
in order to obtain a compact formula for \(P_{\lambda}\).
The latter operator is the map we use to define our probabilistic bijection.

There are two key differences between our setting and the one in \cite{Man24}.
First, our map applies to tableaux with composition shapes,
whereas \cite{Man24} considers only partition shapes.
This distinction allows \cite{Man24} to restrict attention to type I triples,
while we consider both types.
Second, our fillings include basements,
eliminating the need for the so-called \emph{degenerate} triples in \cite{Man24}.
Hence, the probability map in \cref{definition:prob}
and the surrounding lemmas are adapted from the original to fit our setting.

Throughout this section,
let \(\alpha=\composition{\alpha_1,\ldots,\alpha_n}\) be a composition,
let \(\sigma \in \sym{n}\) be a permutation,
and let \(i \in \interval{n-1}\) such that \(\alpha_i = \alpha_{i+1}\).

\begin{definition}[{\(\swap[r]{i}\) and \(\sswap[0, r]{i}\)}]
	\label{definition:swap}
	Given a filling \(T\) of shape \(\alpha\) and \(r \in \interval{0, \alpha_i}\),
	let \(\swap[r]{i}(T)\) denote the filling obtained by
	swapping the entries in boxes \((i, r)\) and \((i+1, r)\) in \(T\).
	Moreover,
	let \(\sswap[0, r]{i} = \swap[r]{i} \circ \cdots \circ \swap[1]{i} \circ \swap[0]{i}\),
	which represents a sequence of swaps
	in the first \(r+1\) rows within the columns \(i\) and \(i+1\).
\end{definition}

Note that if \(T \in \Fill(\alpha, \sigma)\),
then \(\sswap[0, r]{i}(T) \in \Fill(\alpha, \sigma s_i)\) for any \(r\geq 0\).
Also, the fillings \(T\) and \(\sswap[0, r]{i}(T)\) have the same content.
However, \(\swap[r]{i}\) and \(\sswap[0, r]{i}\)
do not in general preserve the property of a filling being non-attacking.

\newcommand{\pp}[2][]{\rho_{#2}^{(#1)}}
\begin{definition}[{\(\pp[r]{i}\)}]
	\label{definition:propagate}
	Let \(T \in \NAF(\alpha, \sigma)\) be a non-attacking filling.
	Let \(r \in \interval{0, \alpha_i-1}\).
	We define \(\pp[r]{i}(T) \in \rationals(q, t)\).
	Let \(a = T(i, r)\), \(b = T(i+1, r)\), \(c = T(i, r+1)\), and \(d = T(i+1, r+1)\).
	Let \(A = \arm(i+1, r+1)\) and \(\ell = \leg(i+1, r+1)\).

	If \(|\{a, b, c, d\}| = 4\), then we split into two cases.
	\begin{enumerate}
		\item \label{item:pp-cda-cdb}
			If \(\ind{c, d, a} = \ind{c, d, b}\), then \(\pp[r]{i}(T) = 0\).
		\item \label{item:pp-cda-dcb}
			If \(\ind{c, d, a} = \ind{d, c, b}\), then \(\pp[r]{i}(T) = 1\).
	\end{enumerate}

	If \(|\{a, b, c, d\}| = 3\),
	the non-attacking condition implies that \(a \neq b\),
	\(a \neq d\) and \(c \neq d\),
	so we have three cases.
	\begin{enumerate}[resume]
		\item \label{item:pp-bc}
			If \(b = c\), then \(\pp[r]{i}(T) = 0\).
		\item \label{item:pp-bd}
			If \(b = d\), then \(\pp[r]{i}(T) = 1\).
		\item \label{item:pp-ac}
			If \(a = c\), then
			\begin{equation*}
				\pp[r]{i}(T) =
				t^{1 - \ind{d, a, b}}
				\frac{1 - q^{\ell+1}t^{A+1}}{1-q^{\ell+1}t^{A+2}}.
			\end{equation*}
	\end{enumerate}

	If \(|\{a, b, c, d\}| = 2\),
	then we have one case.
	\begin{enumerate}[resume]
		\item \label{item:pp-ac-bd}
		If \(a = c\) and \(b = d\), then \(\pp[r]{i}(T) = 1\).
	\end{enumerate}
\end{definition}

\Cref{table:propagate} summarizes the values of \(\pp[r]{i}(T)\)
for \(T \in \NAF(\alpha, \sigma)\) and \(r \in \interval{0, \alpha_i-1}\).
\begin{table}[htbp]
	\centering
	\ytableausetup{boxsize=1em}
	\setcellgapes{0pt}
	\makegapedcells
	\begin{tabular}[c]{rccc}
		\toprule
		& \(T\) & \(\pp[r]{i}(T)\) & \(1 - \pp[r]{i}(T)\) \\
		\midrule
		\ref{item:pp-cda-cdb}
		&
		\begin{tabular}[t]{c}
		\begin{ytableau}
			\none[\scriptstyle r+1] & \none & c & d \\
			\none[\scriptstyle r]   & \none & a & b \\
		\end{ytableau} \\
		{\scriptsize \(\ind{c, d, a} = \ind{c, d, b}\)}
		\end{tabular}
		& \(0\) & \(1\) \\ \midrule
		\ref{item:pp-cda-dcb}
		&
		\begin{tabular}[t]{c}
		\begin{ytableau}
			\none[\scriptstyle r+1] & \none & c & d \\
			\none[\scriptstyle r]   & \none & a & b \\
		\end{ytableau}\\
		{\scriptsize \(\ind{c, d, a} = \ind{d, c, b}\)}
		\end{tabular}
		& \(1\) & \(0\) \\ \midrule
		\ref{item:pp-bc}
		& \begin{ytableau}
				\none[\scriptstyle r+1] & \none & b & d \\
				\none[\scriptstyle r]   & \none & a & b \\
			\end{ytableau}
		& \(0\) & \(1\) \\ \midrule
		\ref{item:pp-bd}
		& \begin{ytableau}
				\none[\scriptstyle r+1] & \none & c & b \\
				\none[\scriptstyle r]   & \none & a & b \\
			\end{ytableau}
		& \(1\) & \(0\) \\ \midrule
		\ref{item:pp-ac}
		& \begin{ytableau}
				\none[\scriptstyle r+1] & \none & a & d \\
				\none[\scriptstyle r]   & \none & a & b \\
			\end{ytableau}
		& \(t^{1 - \ind{d, a, b}} \frac{1 - q^{\ell+1}t^{A+1}}{1-q^{\ell+1}t^{A+2}}\)
		& \((q^{\ell+1}t^{A+1})^{\ind{d, a, b}} \frac{1-t}{1 - q^{\ell+1}t^{A+2}}\) \\ \midrule
		\ref{item:pp-ac-bd}
		& \begin{ytableau}
				\none[\scriptstyle r+1] & \none & a & b \\
				\none[\scriptstyle r]   & \none & a & b \\
				\none & \none[\scriptstyle i] & \none[\scriptstyle i+1]
			\end{ytableau}
		& \(1\) & \(0\) \\
		\bottomrule
	\end{tabular}
	\caption{
		The values of \(\pp[r]{i}(T)\) and \(1 - \pp[r]{i}(T)\)
		for \(T \in \NAF(\alpha, \sigma)\)
		and \(r \in \interval{0, \alpha_i-1}\),
		on each case of \cref{definition:propagate}.
	}
	\label{table:propagate}
\end{table}

For ease of computation,
we note that, in case \ref{item:pp-ac},
we have
\begin{equation*}
	1 - \pp[r]{i}(T)
	=
	(q^{\ell+1}t^{A+1})^{\ind{d, a, b}}
	\frac{1-t}{1 - q^{\ell+1}t^{A+2}},
\end{equation*}
which is straightforward to verify by
splitting into cases of \(\ind{d, a, b}\) being \(0\) or \(1\)
(compare to {\cite[Lemma 3.3]{Man24}}).

As a convention, we set \(\pp[\alpha_i]{i}(T) = 0\).

\begin{definition}[\(\prob{i}\)] \label{definition:prob}
	The map
	\begin{equation*}
		\prob{i} \colon
		\NAF(\alpha, \sigma) \times \Fill(\alpha, \sigma s_i)
		\to \rationals(q, t)
	\end{equation*}
	is defined for \(T \in \NAF(\alpha, \sigma)\) and
	\(U \in \Fill(\alpha, \sigma s_i)\) by letting
	\begin{equation}\label{eq:prob factors}
		\prob{i}(T, U) =
		\left( \prod_{r = 0}^{h-1} \pp[r]{i}(T) \right)
		\left( 1 - \pp[h]{i}(T) \right)
	\end{equation}
	if there exists \(h \in \interval{0, \alpha_i}\)
	such that \(U = \sswap[0, h]{i}(T)\),
	and \(\prob{i}(T, U) = 0\) otherwise.
\end{definition}

\begin{proposition}
	The map
	\(\prob{i} \colon \NAF(\alpha, \sigma) \times \Fill(\alpha, \sigma s_i) \to \rationals(q, t)\)
	is a probability map, that is,
	for all \(T \in \NAF(\alpha, \sigma)\),
	\begin{equation*}
		\sum_{U \in \Fill(\alpha, \sigma s_i)} \prob{i}(T, U) = 1.
	\end{equation*}
\end{proposition}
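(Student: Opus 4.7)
The plan is to recognize this identity as a telescoping sum, with the convention \(\pp[\alpha_i]{i}(T) = 0\) playing the role of a vanishing boundary term. The structure of \cref{eq:prob factors} — a geometric-style product \(\prod_{r<h} \pp[r]{i}(T)\) times a complementary factor \(1 - \pp[h]{i}(T)\) — is precisely the shape that collapses under telescoping, so no case analysis on the six subcases of \cref{definition:propagate} should be needed.

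First, I would reduce the sum over \(U \in \Fill(\alpha, \sigma s_i)\) to a sum indexed by \(h \in \interval{0, \alpha_i}\), using the fact that \(\prob{i}(T, U)\) is nonzero only when \(U = \sswap[0, h]{i}(T)\) for some such \(h\). This is legitimate provided distinct values of \(h\) yield distinct fillings \(U\). Since each elementary swap \(\swap[r]{i}\) is an involution and swaps in distinct rows commute, an equality \(\sswap[0, h]{i}(T) = \sswap[0, h']{i}(T)\) with \(0 \leq h < h' \leq \alpha_i\) would force \(T = \sswap[h+1, h']{i}(T)\), where the latter is the composition of elementary swaps in rows \(h+1, \ldots, h'\). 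This in turn would require \(T(i, r) = T(i+1, r)\) for each \(r \in \interval{h+1, h'}\); but the non-attacking hypothesis forbids equal entries within any single row (columns \(i\) and \(i+1\) are always attacking), giving a contradiction since \(h' \geq 1\).

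Second, writing \(p_r = \pp[r]{i}(T)\) and \(P_h = \prod_{r = 0}^{h-1} p_r\) with the convention \(P_0 = 1\), the injectivity from the previous step yields
\begin{equation*}
	\sum_{U \in \Fill(\alpha, \sigma s_i)} \prob{i}(T, U)
	= \sum_{h = 0}^{\alpha_i} P_h (1 - p_h)
	= \sum_{h = 0}^{\alpha_i} (P_h - P_{h+1})
	= P_0 - P_{\alpha_i + 1}.
\end{equation*}
Since \(P_{\alpha_i + 1} = P_{\alpha_i} \cdot \pp[\alpha_i]{i}(T) = 0\) by the stated convention, the sum equals \(P_0 = 1\).

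The main (mild) obstacle is the injectivity argument, which is where the non-attacking hypothesis is actually used; without it, repeated columns in a row could cause some of the images \(\sswap[0, h]{i}(T)\) to collide, invalidating the reduction to a telescoping sum. The telescoping itself is routine, and the intricate values of \(\pp[r]{i}(T)\) in the various subcases of \cref{definition:propagate} never need to be opened up — they are designed precisely so that the product structure of \(\prob{i}\) collapses cleanly.
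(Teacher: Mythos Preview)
Your proof is correct and follows the same telescoping argument as the paper, which simply records that \(\sum_{h=0}^{\alpha_i} \left(\prod_{r<h} \pp[r]{i}(T)\right)(1-\pp[h]{i}(T)) = 1\) using the convention \(\pp[\alpha_i]{i}(T)=0\). Your injectivity step (that distinct \(h\) yield distinct \(\sswap[0,h]{i}(T)\), via the non-attacking condition) is a careful detail that the paper leaves implicit in its definition of \(\prob{i}\).
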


\begin{proof}
	Using that \(\pp[\alpha_i]{i}(T) = 0\),
	it is straightforward to compute
	\begin{equation*}
		\sum_{h=0}^{\alpha_i}
		\left( \prod_{r = 0}^{h-1} \pp[r]{i}(T) \right)
		\left( 1 - \pp[h]{i}(T) \right)
		= 1,
	\end{equation*}
	as required.
\end{proof}

\begin{example}
	Let \(T \in \NAF(\composition{3, 4, 4, 0, 0}, \window{5, 1, 3, 4, 2})\)
	be the non-attacking filling
	\begin{equation*}
		\ytableausetup{aligntableaux=center}
		\begin{ytableau}
			\none & 3 & 2 \\
			4 & 1 & 2 \\
			3 & 4 & 1 \\
			3 & 4 & 2 \\
			5 & 1 & 3 & 4 & 2
		\end{ytableau}.
	\end{equation*}
	The values of \(\pp[r]{2}(T)\) and \(1 - \pp[r]{2}(T)\) are
	\begin{equation*}
		\begin{aligned}
			\pp[0]{2}(T) &= 1,
			& 1-\pp[0]{2}(T) &= 0, \\
			\pp[1]{2}(T) &= \frac{1 - q^{3}t^{1}}{1 - q^{3}t^{2}},
			& 1-\pp[1]{2}(T) &= \frac{q^{3}t^{1} - q^{3}t^{2}}{1 - q^{3}t^{2}}, \\
			\pp[2]{2}(T) &= 0,
			& 1-\pp[2]{2}(T) &= 1, \\
			\pp[3]{2}(T) &= 1,
			& 1-\pp[3]{2}(T) &= 0, \\
			\pp[4]{2}(T) &= 0,
			& 1-\pp[4]{2}(T) &= 1.
		\end{aligned}
	\end{equation*}
	Hence, the values of \(\prob{2}{T, \sswap[0, h]{2}(T)}\) for \(h \in \interval{0, 4}\) are
	\begin{equation*}
		\begin{aligned}
			\prob{2}\left(T, {\sswap[0, 0]{2}(T)}\right) &= 0, \\
			\prob{2}\left(T, {\sswap[0, 1]{2}(T)}\right) &= \frac{q^{3}t^{1} - q^{3}t^{2}}{1 - q^{3}t^{2}}, \\
			\prob{2}\left(T, {\sswap[0, 2]{2}(T)}\right) &= \frac{1 - q^{3}t^{1}}{1 - q^{3}t^{2}}, \\
			\prob{2}\left(T, {\sswap[0, 3]{2}(T)}\right) &= 0, \\
			\prob{2}\left(T, {\sswap[0, 4]{2}(T)}\right) &= 0.
		\end{aligned}
	\end{equation*}
	The two nonzero values of \(\prob{2}(T, U)\) are obtained for
	\(U = \sswap[0, 1]{2}(T)\) and
	\(U = \sswap[0, 2]{2}(T)\),
	which are, respectively,
	\begin{equation*}
		\ytableausetup{aligntableaux=center}
		\begin{ytableau}
			\none & 3 & 2 \\
			4 & 1 & 2 \\
			3 & 4 & 1 \\
			3 & 2 & 3 \\
			5 & 3 & 1 & 4 & 2
		\end{ytableau}
		\quad \text{and} \qquad
		\begin{ytableau}
			\none & 3 & 2 \\
			4 & 1 & 2 \\
			3 & 1 & 4 \\
			3 & 2 & 4 \\
			5 & 3 & 1 & 4 & 2
		\end{ytableau}.
	\end{equation*}
\end{example}

By construction, the \(\prob{i}\) is nonzero only for pairs of non-attacking fillings.
\begin{proposition} \label{proposition:restrict-to-naf-naf}
	Let \(T \in \NAF(\alpha, \sigma)\) be a non-attacking filling.
	Let \(U \in \Fill(\alpha, \sigma s_i)\) be a filling.
	If \(\prob{i}(T, U) \neq 0\), then \(U\) is non-attacking.
\end{proposition}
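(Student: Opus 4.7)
The plan is to unpack what the nonvanishing of \(\prob{i}(T, U)\) forces on the structure of \(U\), and then verify that no new attacks have been introduced. By \cref{definition:prob}, the hypothesis yields an integer \(h \in \interval{0, \alpha_i}\) with \(U = \sswap[0,h]{i}(T)\), satisfying \(\pp[r]{i}(T) \neq 0\) for every \(r < h\) and \(\pp[h]{i}(T) \neq 1\). In particular, \(U\) agrees with \(T\) outside columns \(i\) and \(i+1\) in rows \(0, 1, \ldots, h\), where the entries have simply been swapped.

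Next I would enumerate the potentially attacking pairs of boxes in \(U\). Any attack entirely within a single row, or between rows \(r\) and \(r+1\) that does not put the upper box at column \(i+1\) together with the lower box at column \(i\), corresponds (possibly after renaming columns \(i \leftrightarrow i+1\)) to an attack already present in \(T\), so non-attacking of \(T\) supplies the needed inequality. A short case check shows that the only genuinely new attacks are (a) the pair \((i+1, r+1)\) versus \((i, r)\) for \(r+1 \leq h\), with \(U\)-values \(T(i, r+1)\) and \(T(i+1, r)\); and (b) the pair \((i+1, h+1)\) versus \((i, h)\), when row \(h+1\) is populated, with \(U\)-values \(T(i+1, h+1)\) and \(T(i+1, h)\).

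For each such new attack I would set \(a = T(i, r)\), \(b = T(i+1, r)\), \(c = T(i, r+1)\), \(d = T(i+1, r+1)\) (taking \(r = h\) in case (b)) and consult \cref{table:propagate}. For (a), the hypothesis \(\pp[r]{i}(T) \neq 0\) excludes cases \ref{item:pp-cda-cdb} and \ref{item:pp-bc}; in each remaining case the inequality \(b \neq c\) follows either because the four values are all distinct or because any further coincidence (for example \(a = c\) together with \(b = c\)) would force two entries of \(T\) that are same-row attacking at level \(r\) or \(r+1\) to coincide. For (b), the hypothesis \(\pp[h]{i}(T) \neq 1\) excludes cases \ref{item:pp-cda-dcb}, \ref{item:pp-bd}, and \ref{item:pp-ac-bd}; the remaining cases give \(b \neq d\) by an analogous argument, with contradictions drawn from the same-row attack between \((i, h+1)\) and \((i+1, h+1)\) in \(T\).

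The main obstacle is the bookkeeping: correctly partitioning the attacking pairs of \(U\) into ``old'' pairs inherited from \(T\) and the two families (a) and (b) of ``new'' pairs, and then working through each of the six configurations of \cref{definition:propagate} to see that the exclusions supplied by the hypothesis precisely rule out the bad coincidences. Each individual check is a short pigeonhole argument, but they must all be carried out to confirm that no new attack survives in \(U\).
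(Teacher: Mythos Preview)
Your proposal is correct and takes essentially the same approach as the paper's proof: reduce to the attacking pairs lying entirely in columns \(i\) and \(i+1\), then use the nonvanishing conditions on the \(\pp[r]{i}\) to rule out the problematic coincidences \(b=c\) (for \(r<h\)) and \(b=d\) (for \(r=h\)). The paper streamlines the casework slightly by observing that \(b=c\) can \emph{only} occur in case~\ref{item:pp-bc} and \(b=d\) can only occur in cases~\ref{item:pp-bd} or~\ref{item:pp-ac-bd}, so excluding those cases alone already suffices---you need not separately exclude~\ref{item:pp-cda-cdb} in part~(a) or~\ref{item:pp-cda-dcb} in part~(b), nor run through the remaining cases individually.
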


\begin{proof}
	Note that attacking pairs not involving columns \(i\) and \(i+1\)
	remain unchanged between \(T\) and \(U\),
	and attacking pairs involving one box in columns \(i\) or \(i+1\) in \(T\)
	correspond to attacking pairs with a box in the same columns in \(U\),
	though their positions may be swapped between the two columns.
	Hence,
	the only distinction in attacking pairs between \(T\) and \(U\)
	comes from the attacking boxes in both columns \(i\) and \(i+1\).

	If \(\prob{i}(T, U) \neq 0\),
	there exists \(h \in \interval{0, \alpha_i}\) such that \(U = \sswap[0, h]{i}(T)\),
	and thus by \eqref{eq:prob factors},
	\(\pp[r]{i}(T) \neq 0\) for all \(r \in \interval{0, h-1}\) and \(\pp[h]{i}(T) \neq 1\).
	Therefore,
	it suffices to show that if \(T\) is non-attacking,
	then \(U(i, r) \neq U(i+1, r+1)\) for all \(r \in \interval{0, h}\).
	For \(r \in \interval{0, h-1}\),
	we have \(\pp[r]{i}(T) \neq 0\), so the configuration of \cref{item:pp-bc} does not occur;
	thus, \(U(i, r) = T(i+1, r) \neq T(i, r+1) = U(i+1, r+1)\).
	For \(r = h\),
	we have \(\pp[h]{i}(T) \neq 1\),
	so configurations of \cref{item:pp-bd} or \cref{item:pp-ac-bd} does not occur;
	thus, \(U(i, h) = T(i+1, h) \neq T(i+1, h+1) = U(i+1, h+1)\).
	Therefore, \(U\) is indeed non-attacking.
\end{proof}

\begin{corollary}
	The restriction
	\begin{equation*}
		\prob{i} \colon \NAF(\alpha, \sigma) \times \NAF(\alpha, \sigma s_i) \to \rationals(q, t)
	\end{equation*}
	is a probability map.
\end{corollary}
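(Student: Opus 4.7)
The plan is to observe that this corollary is an immediate bookkeeping consequence of the two preceding results, so the proof will essentially be one line. First, I would invoke the proposition stated just before \cref{proposition:restrict-to-naf-naf} to obtain
\begin{equation*}
	\sum_{U \in \Fill(\alpha, \sigma s_i)} \prob{i}(T, U) = 1
\end{equation*}
for each fixed \(T \in \NAF(\alpha, \sigma)\).

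Second, I would invoke \cref{proposition:restrict-to-naf-naf} to conclude that every summand indexed by an attacking \(U \in \Fill(\alpha, \sigma s_i) \setminus \NAF(\alpha, \sigma s_i)\) vanishes, since the contrapositive of that proposition says precisely that \(\prob{i}(T, U) = 0\) whenever \(U\) fails to be non-attacking. Consequently, restricting the index set from \(\Fill(\alpha, \sigma s_i)\) down to \(\NAF(\alpha, \sigma s_i)\) only removes zero terms, so
\begin{equation*}
	\sum_{U \in \NAF(\alpha, \sigma s_i)} \prob{i}(T, U) = \sum_{U \in \Fill(\alpha, \sigma s_i)} \prob{i}(T, U) = 1,
\end{equation*}
which is exactly the condition for the restricted map to be a probability map.

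There is no real obstacle at this stage: the conceptual content has already been discharged in \cref{proposition:restrict-to-naf-naf}, whose proof inspected cases \ref{item:pp-bc}, \ref{item:pp-bd}, and \ref{item:pp-ac-bd} of \cref{definition:propagate} to confirm that configurations that would produce an attacking pair between columns \(i\) and \(i+1\) in \(U\) are assigned weight zero in \eqref{eq:prob factors}. Hence the corollary reduces to noting that the support of \(\prob{i}(T, \cdot)\) lies inside \(\NAF(\alpha, \sigma s_i)\).
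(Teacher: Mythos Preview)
Your proposal is correct and matches the paper's intent: the paper states this corollary without proof, treating it as an immediate consequence of the preceding proposition (that \(\prob{i}\) is a probability map on \(\NAF(\alpha,\sigma)\times\Fill(\alpha,\sigma s_i)\)) together with \cref{proposition:restrict-to-naf-naf}. Your two-step argument---sum over all fillings equals \(1\), then drop the zero terms coming from attacking \(U\)---is exactly the reasoning the paper leaves implicit.
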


As an abuse of notation, we write \(\prob{i}\) for both
the function on \(\NAF(\alpha, \sigma) \times \Fill(\alpha, \sigma s_i)\) and
the function on \(\NAF(\alpha, \sigma s_i) \times \NAF(\alpha, \sigma)\).
Moreover, we also write \(\prob{i}\) for both
the function on \(\NAF(\alpha, \sigma) \times \NAF(\alpha, \sigma s_i)\) and
the function on \(\NAF(\alpha, \sigma s_i) \times \NAF(\alpha, \sigma)\).

The main result of this section is \cref{theorem:wt-prob},
which generalizes \cite[Lemma 4.4]{Man24} to our setting.

\begin{theorem}[adapted from {\cite[Lemma 4.4]{Man24}}]
	\label{theorem:wt-prob}
	The pair of maps
	\begin{gather*}
		{\prob{i}} \colon \NAF(\alpha, \sigma) \times \NAF(\alpha, \sigma s_i) \to \rationals(q, t) \\
		{\prob{i}} \colon \NAF(\alpha, \sigma s_i) \times \NAF(\alpha, \sigma) \to \rationals(q, t)
	\end{gather*}
	defines a probabilistic bijection
	between \(\NAF(\alpha, \sigma)\) and \(\NAF(\alpha, \sigma s_i)\)
	with respect to the weight function \(\wtqt\).
	In other words,
	for all \(T \in \NAF(\alpha, \sigma)\)
	and \(U \in \NAF(\alpha, \sigma s_i)\),
	\begin{equation*}
		\wtqt(U)\prob{i}(U, T) = \wtqt(T)\prob{i}(T, U).
	\end{equation*}
\end{theorem}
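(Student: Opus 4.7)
The plan is to reduce the detailed balance equation to a family of row-local identities and then verify each identity by case analysis on the six cases of \cref{definition:propagate}.

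First, I observe that both $\prob{i}(T,U)$ and $\prob{i}(U,T)$ vanish unless there exists $h \in \{0,1,\ldots,\alpha_i\}$ with $U = \sswap[0,h]{i}(T)$; since $\sswap[0,h]{i}$ acts as an involution on each of the rows it touches, this condition is symmetric in $T$ and $U$, and I may fix such an $h$. Next, applying the row decomposition $\wtqt(T) = \prod_{r=0}^{L-1} \wtqt[r](T)$ and noting that $T$ and $U$ coincide outside rows $0,\ldots,h$ of columns $i$ and $i+1$, the factors $\wtqt[r](T) = \wtqt[r](U)$ for all $r \geq h+1$ cancel, since each such factor depends only on rows $r$ and $r+1$. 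After this cancellation, the balance is implied by the two families of local identities
\[
\wtqt[r](T)\,\pp[r]{i}(T) = \wtqt[r](U)\,\pp[r]{i}(U), \qquad r \in \{0,\ldots,h-1\},
\]
\[
\wtqt[h](T)\,(1 - \pp[h]{i}(T)) = \wtqt[h](U)\,(1 - \pp[h]{i}(U)).
\]

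To establish each local identity, I would proceed case by case according to the quadruple $(T(i,r), T(i+1,r), T(i,r+1), T(i+1,r+1))$ featured in \cref{definition:propagate}, tracking how the swap alters (i) the descent contributions at the boxes $(i,r+1)$ and $(i+1,r+1)$ both to $\maj[r+1]$ and to the product of $(1-t)/(1 - q^{1+\leg}t^{1+\arm})$ factors, and (ii) the count $\coinv[r,r+1]$, which now incorporates both type~I triples (right arm) and type~II triples (left arm). A key structural input is that $\alpha_i = \alpha_{i+1}$ forces $\leg(i,r+1) = \leg(i+1,r+1) = \ell$ and $\arm(i,r+1) = \arm(i+1,r+1) + 1 = A + 1$, with the extra arm element being precisely $(i+1,r+1)$ itself. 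From this, the ratio $(1 - q^{\ell+1}t^{A+1})/(1 - q^{\ell+1}t^{A+2})$ that appears in case~\ref{item:pp-ac} emerges naturally as the ratio of descent-factor contributions at the swapped box. For cases in which $\pp[r]{i}(T) \in \{0,1\}$, the identity either holds on the nose or reduces, via \cref{proposition:restrict-to-naf-naf}, to a verification that the configuration in $U$ belongs to the matching symmetric case of $\pp[r]{i}(U)$.

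The main obstacle is case~\ref{item:pp-ac} (together with its boundary analogue at $r=h$), where the full rational expression must be produced from the interplay of the descent factor shift, the power-of-$t$ contribution $t^{1 - \ind{d,a,b}}$ from the change in $\coinv[r,r+1]$, and the $q$-powers from the change in $\maj[r+1]$. Compared with the partition-shape argument of~\cite[Lemma 4.4]{Man24}, the novel ingredient is the careful accounting of type~II triples: when some column $i' > i+1$ is strictly taller than column $i$, the boxes $(i,r)$ and $(i+1,r)$ can participate in type~II triples anchored at $(i',r+1)$, contributing to $\coinv[r,r+1]$ in a way absent in the partition setting. Once the contributions of all affected triples are tabulated using the indicator $\ind{T(u), T(v), T(w)}$, the computation becomes mechanical and the desired identities can be verified directly within each case.
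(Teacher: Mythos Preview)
Your proposal is correct and follows essentially the same approach as the paper: the paper packages your two families of row-local identities as \cref{lemma:wtqt-prob-balanced-r} and \cref{lemma:wtqt-prob-balanced-h}, then multiplies them together with the trivial equalities for $r>h$, exactly as you outline. One small correction worth noting: the type~II triples anchored at a taller column $i'>i+1$ that you flag actually cancel cleanly (the swap just exchanges the two such triples), and the nontrivial bookkeeping instead comes from triples with $u\in\{(i,h+1),(i+1,h+1)\}$ and $v\in\Arm(i+1,h+1)$ at the stopping row $r=h$, which the paper handles uniformly via the five-variable indicator identity of \cref{lemma:useful-ind-five-variables}.
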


For organization purposes,
we split the proof of \cref{theorem:wt-prob} into several lemmas.

\begin{lemma} \label{lemma:useful-ind-five-variables}
	Let \(a, b, c, d, f \in [n]\).
	Then,
	\begin{equation*}
		\ind{c, f, a} + \ind{d, f, b} - \ind{c, f, b} - \ind{d, f, a}
		= \ind{c, b} + \ind{d, a} - \ind{c, a} - \ind{d, b}.
	\end{equation*}
\end{lemma}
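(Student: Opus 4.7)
The plan is to reduce this identity for three-variable indicators to a triviality by directly invoking the definition \(\ind{x, y, z} = \ind{x, y} + \ind{y, z} - \ind{x, z}\) introduced earlier in the paper. First I would expand each of the four three-variable indicators on the left-hand side according to this formula, obtaining twelve two-variable indicator terms. Then I would collect like terms: the contributions \(\ind{c, f}\), \(\ind{d, f}\), \(\ind{f, a}\), and \(\ind{f, b}\) each appear once with a positive sign and once with a negative sign across the four expansions, so every indicator involving \(f\) cancels. What remains is precisely \(-\ind{c, a} - \ind{d, b} + \ind{c, b} + \ind{d, a}\), matching the right-hand side.

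There is essentially no obstacle here, since the identity is a purely formal consequence of the defining linear combination. The only mild subtlety worth flagging is that the paper originally defines \(\ind{x, y, z}\) as an indicator valued in \(\{0, 1\}\) under the hypotheses \(x \neq y\) and \(y \neq z\), whereas the lemma places no such restriction on \(a, b, c, d, f\). I would resolve this by interpreting \(\ind{x, y, z}\) throughout the proof as the integer expression \(\ind{x, y} + \ind{y, z} - \ind{x, z}\); the cancellation computation above is then an unconditional algebraic identity, so the lemma follows in the full generality as stated.
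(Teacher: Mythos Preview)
Your proposal is correct and is essentially the same as the paper's approach: the paper simply states that the lemma ``follows from a direct computation,'' and the expansion you describe via the defining formula \(\ind{x, y, z} = \ind{x, y} + \ind{y, z} - \ind{x, z}\), followed by cancellation of all terms involving \(f\), is precisely that computation. Your remark about extending the definition of \(\ind{x, y, z}\) to arbitrary triples (dropping the \(x \neq y\), \(y \neq z\) hypotheses) is a fair point and is harmless for this identity, since the argument is purely algebraic.
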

\cref{lemma:useful-ind-five-variables} follows from a direct computation.

\begin{lemma} \label{lemma:wtqt-prob-balanced-r}
	Let \(T \in \NAF(\alpha, \sigma)\) be a non-attacking filling.
	Let \(U = \sswap[0, h]{i}(T)\) for some \(h \in \interval{0, \alpha_i}\).
	Then, for all \(r \in \interval{0, h - 1}\),
	\begin{equation} \label{eq:wtqt-prob-balanced-r}
		\wtqt[r](U)\pp[r]{i}(U) = \wtqt[r](T)\pp[r]{i}(T).
	\end{equation}
\end{lemma}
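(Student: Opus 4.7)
My plan is to verify the identity by a case analysis on the six cases of \cref{definition:propagate} applied to $T$ at rows $r, r+1$. Set $a = T(i, r)$, $b = T(i+1, r)$, $c = T(i, r+1)$, $d = T(i+1, r+1)$. Since $r \leq h-1$, both rows $r$ and $r+1$ lie within the swap range, so $U(i, r) = b$, $U(i+1, r) = a$, $U(i, r+1) = d$, and $U(i+1, r+1) = c$. The equality $\alpha_i = \alpha_{i+1}$ forces $\leg(i, r+1) = \leg(i+1, r+1) = \ell$, and since $(i+1, r+1)$ lies in $\rightArm(i, r+1)$ but not conversely, we have $\arm(i, r+1) = A + 1$ with $A = \arm(i+1, r+1)$.

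Before splitting into cases, I would compare $\wtqt[r](T)$ and $\wtqt[r](U)$ at the level of \eqref{eq:rthcomponent}. Contributions from boxes and triples not involving either of columns $i, i+1$ are manifestly unchanged. Among the triples spanning rows $r, r+1$ that do involve these columns, every one other than the ``internal'' triple $((i, r+1), (i+1, r+1), (i, r))$ admits a canonical partner under the exchange $i \leftrightarrow i+1$: for instance, the right-arm triples $((i, r+1), (k, r+1), (i, r))$ and $((i+1, r+1), (k, r+1), (i+1, r))$ pair up for $k > i+1$ with $\alpha_k \leq \alpha_i$, and analogously for the left-arm and type-II pairs. In each pair the two indicator values swap between $T$ and $U$, so the pair's summed contribution to $\coinv[r, r+1]$ is preserved. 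Thus the only net changes between $T$ and $U$ come from the internal triple (with indicators $\ind{c, d, a}$ in $T$ and $\ind{d, c, b}$ in $U$) and from the descent factors at the boxes $(i, r+1)$ and $(i+1, r+1)$.

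With this reduction in hand, the easier cases dispatch quickly. In case \ref{item:pp-cda-cdb}, the identity $\ind{y, x, z} = 1 - \ind{x, y, z}$ shows $U$ falls into the same case as $T$, so both sides of \eqref{eq:wtqt-prob-balanced-r} vanish. Case \ref{item:pp-cda-dcb} likewise places $U$ in the same case with both probabilities equal to $1$, and $\wtqt[r](T) = \wtqt[r](U)$ follows because descents merely exchange between columns of equal leg (preserving $\maj[r+1]$ and the product of the two descent factors) while the internal triple's indicator is preserved by hypothesis. Case \ref{item:pp-ac-bd} is immediate: no box of columns $i, i+1$ in row $r+1$ contributes a descent factor, and the internal triple has indicator $\ind{a, b, a} = \ind{b, a, b} = 1$ on both sides. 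Case \ref{item:pp-bc} does not contribute, since $b = c$ would force the attacking pair $U(i, r) = U(i+1, r+1) = b$, placing $U$ outside $\NAF(\alpha, \sigma s_i)$.

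The main obstacle is the mirror pair of cases \ref{item:pp-ac} and \ref{item:pp-bd}. In case \ref{item:pp-ac}, the $U$-configuration falls into case \ref{item:pp-bd} (so $\pp[r]{i}(U) = 1$), and we must show $\wtqt[r](T)/\wtqt[r](U) = 1/\pp[r]{i}(T)$. Three ingredients combine to give exactly this ratio: the single descent factor at $(i+1, r+1)$ in $T$ with denominator $1 - q^{\ell+1}t^{A+1}$ moves in $U$ to box $(i, r+1)$ with denominator $1 - q^{\ell+1}t^{A+2}$; the internal triple's indicator shifts from the forced value $\ind{a, d, a} = 1$ in $T$ to $\ind{d, a, b}$ in $U$, yielding a factor $t^{1 - \ind{d, a, b}}$; and $\maj[r+1]$ is unchanged because the unique row-$(r+1)$ descent in columns $i, i+1$ always has leg $\ell$. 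Case \ref{item:pp-bd} is dispatched by the mirror argument. The delicate step is confirming that these three changes assemble precisely into the rational function prescribed by $\pp[r]{i}$, but beyond the partner-pairing of triples established above, no new idea is required.
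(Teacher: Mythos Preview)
Your proposal is correct and follows essentially the same approach as the paper's own proof: both reduce the comparison of \(\wtqt[r](T)\) and \(\wtqt[r](U)\) to the single ``internal'' triple \(((i,r+1),(i+1,r+1),(i,r))\) and the descent factors at \((i,r+1),(i+1,r+1)\) via the same pairing of triples under \(i\leftrightarrow i+1\), and then carry out the same case analysis on \cref{definition:propagate}. One small wrinkle: in case~\ref{item:pp-ac} the factor contributing to the ratio \(\wtqt[r](T)/\wtqt[r](U)\) from the internal triple is \(t^{\ind{d,a,b}-1}\), not \(t^{1-\ind{d,a,b}}\) as you wrote (the latter is the factor in the reciprocal direction); once corrected, your three ingredients assemble exactly into \(1/\pp[r]{i}(T)\) as required.
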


\begin{proof}
	Let \(r \in \interval{0, h-1}\).
	Let
	\begin{align*}
		c & = T(i, r+1)=U(i+1, r+1), &
		d & = T(i+1, r+1)=U(i, r+1), \\
		a & = T(i, r)=U(i+1, r), \text{ and } &
		b & = T(i+1, r)=U(i, r).
	\end{align*}
	Since \(U\) is non-attacking, we have \(b \neq c\).
	We note that \(\maj[r + 1](T) = \maj[r + 1](U)\).
	Moreover,
	triples not involving columns \(i\) and \(i+1\)
	are preserved between \(T\) and \(U\),
	triples involving column \(i\) in \(T\)
	correspond to triples involving column \(i+1\) in \(U\),
	and triples involving column \(i+1\) in \(T\)
	correspond to triples involving column \(i\) in \(U\).
	Hence, the only distinction in the coinversion and inversion statistics between \(T\) and \(U\)
	comes from the triples involving both columns \(i\) and \(i+1\).
	Therefore,
	\begin{equation*}
		\coinv[r, r + 1](T) - \coinv[r, r + 1](U)
		=
		\inv[r, r + 1](U) - \inv[r, r + 1](T)
		=
		\ind{d, c, b} - \ind{c, d, a}.
	\end{equation*}
	We show that \eqref{eq:wtqt-prob-balanced-r} holds
	for all cases of \((a,b,c,d)\) with \(b\neq c\).

	First, suppose \(|\{a, b, c, d\}| = 4\).
	Then,
	the sets \(\{u \in \Arm(i+1, r+1) \suchthat T(u) \neq T(\south{u})\}\)
	and \(\{u \in \Arm(i+1, r+1) \suchthat U(u) \neq U(\south{u})\}\) are equal.
	\begin{itemize}[left=0pt]
		\item On the one hand, if \(\ind{c, d, a} = \ind{c, d, b}\),
			then \(\pp[r]{i}(T) = 0\) by \cref{item:pp-cda-cdb}.
			Then, it also holds that \(\ind{d, c, b} = \ind{d, c, a}\),
			and hence \(\pp[r]{i}(U) = 0\) by \cref{item:pp-cda-cdb}.
		\item On the other hand, if \(\ind{c, d, a}=1-\ind{c, d, b} = \ind{d, c, b}\),
			then \(\pp[r]{i}(T) = 1\) by \cref{item:pp-cda-dcb}.
			Then, it also holds that \(\ind{d, c, b} = \ind{c, d, a}\),
			and hence \(\pp[r]{i}(U) = 1\) by \cref{item:pp-cda-dcb}.
			For the (co)inversion number, we have
			\(\coinv[r, r + 1](T) - \coinv[r, r + 1](U) = \ind{d, c, b} - \ind{c, d, a} = 0\).
	\end{itemize}
	Thus, in both cases for \(\ind{c,d,a}\),
	\eqref{eq:wtqt-prob-balanced-r} holds for \(|\{a,b,c,d\}| = 4\).

	Next, suppose \(|\{a, b, c, d\}| = 3\).
	Since \(b\neq c\), we either have \(b=d\) or \(a=c\).
	Without loss of generality, assume \(b=d\),
	since swapping \(T\) and \(U\) gives the other case.
	By \cref{item:pp-bd}, we have
	\(\pp[r]{i}(T) = 0\).
	Moreover,
	\begin{equation*}
		\pp[r]{i}(U)
		=
		t^{1 - \ind{c, b, a}}
		\frac{(1 - q^{\ell+1}t^{A+1})}{1 - q^{\ell+1}t^{A+2}},
	\end{equation*}
	where \(A = \arm(i+1, r+1)\), by \cref{item:pp-ac}.
	For the (co)inversion number, we have
	\(\coinv[r, r + 1](T) - \coinv[r, r + 1](U) = \ind{b, c, b} - \ind{c, b, a} = 1 - \ind{c, b, a}\).
	Finally, the ratio
	\begin{equation*}
		\left(
			\prod_{\substack{u \in \dg[r + 1]{\alpha} \\ T(u) \neq T(\south{u})}}
			\frac{1 - t}{1 - q^{\leg(u)+1} t^{ \arm(u)+1}}
		\right)
		\big/
		\left(
			\prod_{\substack{u \in \dg[r + 1]{\alpha} \\ U(u) \neq U(\south{u})}}
			\frac{1 - t}{1 - q^{\leg(u)+1} t^{\arm(u)+1}}
		\right)
	\end{equation*}
	only differs in the boxes \((i,r+1)\) and \((i+1,r+1)\), and thus equals
	\begin{equation*}
		\left(
		\frac{1-t}{1 - q^{\ell+1}t^{A+2}}
		\right)
		\big/
		\left(
		\frac{1-t}{1 - q^{\ell+1}t^{A+1}}
		\right)
		= \frac{1 - q^{\ell+1}t^{A+1}}{1 - q^{\ell+1}t^{A+2}},
	\end{equation*}
	where \(A = \arm(i+1, r+1)\).
	Therefore, we compute
	\begin{equation*}
		\frac{\wtqt[r](T)}{\wtqt[r](U)} =
		t^{1 - \ind{c, b, a}} \frac{1 - q^{\ell+1}t^{A+1}}{1 - q^{\ell+1}t^{A+2}}
		= \pp[r]{i}(T, U),
	\end{equation*}
	confirming \eqref{eq:wtqt-prob-balanced-r}.

	Finally, if \(|\{a, b, c, d\}| = 2\),
	then \(a = c\) and \(b = d\).
	Hence, \(\pp[r]{i}(T) = \pp[r]{i}(U) = 1\), by \cref{item:pp-ac-bd}.
	For the (co)inversion number, we have
	\(\coinv[r, r + 1](T) - \coinv[r, r + 1](U) = \ind{b, a, b} - \ind{a, b, a} = 1 - 1 = 0\).
	Moreover, the sets \(\{u \in \Arm(i+1, r+1) \suchthat T(u) \neq T(\south{u})\}\)
	and \(\{u \in \Arm(i+1, r+1) \suchthat U(u) \neq U(\south{u})\}\) are equal.
	Thus, \eqref{eq:wtqt-prob-balanced-r} holds in this final case, as well.
\end{proof}

\begin{lemma} \label{lemma:wtqt-prob-balanced-h}
	Let \(T \in \NAF(\alpha, \sigma)\) be a non-attacking filling.
	Let \(U = \sswap[0, h]{i}(T)\) for some \(h \in \interval{0, \alpha_i}\).
	Then,
	\begin{equation} \label{eq:wtqt-prob-balanced-h}
		\wtqt[h](U)\left(1 - \pp[h]{i}(U)\right) = \wtqt[h](T)\left(1 - \pp[h]{i}(T)\right).
	\end{equation}
\end{lemma}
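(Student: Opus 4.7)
The plan is to perform a case analysis on the multiset $\{a, b, c, d\}$, where $a = T(i, h)$, $b = T(i+1, h)$, $c = T(i, h+1)$, and $d = T(i+1, h+1)$, paralleling the approach of \cref{lemma:wtqt-prob-balanced-r}. The non-attacking hypothesis on $T$ forces $a \neq b$, $c \neq d$, and $a \neq d$, so the possible coincidences among these four entries are precisely the cases enumerated in \cref{definition:propagate}. The crucial difference from the $r < h$ setting is that only row $h$ is swapped between $T$ and $U$, while row $h+1$ is left unchanged, so the outer triples no longer pair up perfectly as they did in the proof of \cref{lemma:wtqt-prob-balanced-r}.

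I would track three sources of change in the ratio $\wtqt[h](T)/\wtqt[h](U)$. Writing $\ell = \leg(i+1, h+1)$ and $A = \arm(i+1, h+1)$, these are: the change in $\maj[h+1]$, governed by whether $(i, h+1)$ and $(i+1, h+1)$ are descents in $T$ versus $U$ and weighted by $\ell + 1$; the change in $\coinv[h, h+1]$, which splits into the central-triple contribution $\ind{c, d, a} - \ind{c, d, b}$ and an outer-triple contribution; and the ratio of products $\prod (1-t)/(1 - q^{\leg+1} t^{\arm+1})$ over row-$(h+1)$ boxes with a distinct south. By \cref{lemma:useful-ind-five-variables}, each outer pair of triples (one with $u$ in column $i$ and one with $u$ in column $i+1$, sharing a common third column) contributes the $v$-independent quantity $\ind{c, b} + \ind{d, a} - \ind{c, a} - \ind{d, b}$ to the $T$-minus-$U$ difference of indicator sums. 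The main structural observation is that the total number of outer pairs equals $|\leftArm(i+1, h+1)| + |\rightArm(i+1, h+1)| = A$, matching the arm exponent appearing in $\pp[h]{i}(T)$.

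Next I would check each case of \cref{definition:propagate}. In case \ref{item:pp-cda-cdb}, the hypothesis $\ind{c, d, a} = \ind{c, d, b}$ makes all three sources of change vanish, giving $\wtqt[h](T) = \wtqt[h](U)$ and matching $\pp[h]{i}(T) = \pp[h]{i}(U) = 0$. In case \ref{item:pp-cda-dcb}, both $\pp[h]{i}(T)$ and $\pp[h]{i}(U)$ equal $1$, so both sides of the identity vanish. Cases \ref{item:pp-bd} and \ref{item:pp-ac-bd} force $U(i, h) = U(i+1, h+1)$, making $U$ attacking; since $1 - \pp[h]{i}(T) = 0$ in these cases, the identity holds vacuously under the convention (implicit in the proof of \cref{lemma:wtqt-prob-balanced-r} as well) that we take $U \in \NAF$. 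Finally, cases \ref{item:pp-ac} and \ref{item:pp-bc} are exchanged under $T \leftrightarrow U$, so it suffices to verify one.

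The main obstacle is the bookkeeping in case \ref{item:pp-ac}. Here $a = c$ forces $\ind{c, d, a} = 1$, the outer per-pair formula simplifies to $\ind{d, a, b}$ via $\ind{d, a, b} = \ind{d, a} + \ind{a, b} - \ind{d, b}$, and the $\maj$ change evaluates to $-(\ell+1)\ind{d, a, b}$. Assembling these, the coinversion change becomes $\ind{a, d, b} - 1 - A \cdot \ind{d, a, b}$, and the product ratio contributes $(1 - q^{\ell+1} t^{A+2})/(1-t)$. One must then recover $\wtqt[h](U)/\wtqt[h](T) = (q^{\ell+1} t^{A+1})^{\ind{d, a, b}}(1-t)/(1 - q^{\ell+1} t^{A+2}) = 1 - \pp[h]{i}(T)$, using that $\pp[h]{i}(U) = 0$ since $U$ lies in case \ref{item:pp-bc}. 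The reconciliation rests on the elementary identity $\ind{a, d, b} + \ind{d, a, b} = 1$ for pairwise distinct $a, b, d$, readily verified by inspecting the six linear orderings of $(a, b, d)$.
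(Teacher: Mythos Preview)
Your proposal is correct and follows essentially the same route as the paper: compute $\maj[h+1](T)-\maj[h+1](U)$ and $\coinv[h,h+1](T)-\coinv[h,h+1](U)$ (the latter via \cref{lemma:useful-ind-five-variables}, yielding the factor $(A+1)(\ind{c,a}+\ind{d,b}-\ind{c,b}-\ind{d,a})$), then do the case split of \cref{definition:propagate}. The only cosmetic differences are that you carry out the three-element case with $a=c$ whereas the paper does the symmetric case $b=c$, and that you handle $b=d$ by observing $U$ becomes attacking while the paper simply opens with ``since $U$ is non-attacking, $b\neq d$''; both amount to the same implicit hypothesis $U\in\NAF$. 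One small point worth making explicit in your write-up: your description of ``outer pairs'' only covers triples with $u$ in column $i$ or $i+1$, but there are also triples with $v\in\{(i,h),(i+1,h)\}$ and $u$ in a taller column $j>i+1$; these come in cancelling pairs (since $\alpha_i=\alpha_{i+1}$ forces the same set of such $j$), which is why the total outer contribution is exactly $A$ terms.
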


\begin{proof}
	If \(h = \alpha_i\),
	then \(\wtqt[\alpha_i](T) = \wtqt[\alpha_i](U) = 1\)
	and \(1 - \pp[\alpha_i]{i}(T) = 1 - \pp[\alpha_i]{i}(U) = 1\),
	so \eqref{eq:wtqt-prob-balanced-h} holds trivially.
	Assume that \(h < \alpha_i\).
	Then
	\begin{align*}
		c & = T(i, h+1)=U(i,h+1), &
		d & = T(i+1, h+1)=U(i+1,h+1), \\
		a & = T(i, h)=U(i+1,h), \text{ and } &
		b & = T(i+1, h)=U(i,h).
	\end{align*}
	Since \(U\) is non-attacking, we have \(b \neq d\).
	We note that
	\begin{equation*}
		\maj[h + 1](T) - \maj[h + 1](U) = \ind{c, a} + \ind{d, b} - \ind{c, b} - \ind{d, a}.
	\end{equation*}
	Moreover, we have
	\begin{align*}
		\coinv[h, h + 1](T) - \coinv[h, h + 1](U) & = \inv[h, h + 1](U) - \inv[h, h + 1](T)\\
		& = \ind{c, d, b} - \ind{c, d, a} +
		\sum_{\substack{u \in \Arm(i+1, h+1) \\ k = U(u)}}
		\left( \ind{c, k, b} + \ind{d,k, a} - \ind{c,k, a} - \ind{d,k, b} \right),
	\end{align*}
	which simplifies to
	\( (A+1) \left( \ind{c, a} + \ind{d, b} - \ind{c, b} - \ind{d, a} \right)\)
	using \cref{lemma:useful-ind-five-variables}, where \(A = \arm(i+1, h+1)\).

	We show that \eqref{eq:wtqt-prob-balanced-h} holds
	for all possible cases for \((a,b,c,d)\) with \(b\neq d\).

	First, suppose \(|\{a, b, c, d\}| = 4\).
	Then, the sets \(\{u \in \Arm(i+1, h+1) \suchthat T(u) \neq T(\south{u})\}\)
	and \(\{u \in \Arm(i+1, h+1) \suchthat U(u) \neq U(\south{u})\}\) are equal.
	\begin{itemize}[left=0pt]
		\item On the one hand, if \(\ind{c, d, a} = \ind{c, d, b}\),
			then \(1 - \pp[h]{i}(T) = 1\) by \cref{item:pp-cda-cdb}.
			Then, it also holds that \(\ind{d, c, b} = \ind{d, c, a}\),
			and hence \(1 - \pp[h]{i}(U) = 1\) by \cref{item:pp-cda-cdb}.
			It also holds that
			\begin{equation*}
				\ind{c, a} + \ind{d, b} - \ind{c, b} - \ind{d, a} = 0,
			\end{equation*}
			therefore, \(\maj[h + 1](T) = \maj[h + 1](U)\)
			and \(\coinv[h, h + 1](T) = \coinv[h, h + 1](U)\).
		\item On the other hand, if \(\ind{c, d, a}=1-\ind{c, d, b} = \ind{d, c, b}\),
			then \(1 - \pp[h]{i}(T) = 0\) by \cref{item:pp-cda-dcb}.
			Then, it also holds that \(\ind{d, c, b} = \ind{c, d, a}\),
			and hence \(1 - \pp[h]{i}(U) = 0\) by \cref{item:pp-cda-dcb}.
	\end{itemize}
	Thus, in both cases for \(\ind{c,d,a}\), \eqref{eq:wtqt-prob-balanced-h} holds for \(|\{a,b,c,d\}|=4\).

	Now suppose that \(|\{a, b, c, d\}| = 3\).
	Since \(b\neq d\), without loss of generality we may assume \(b=c\).
	By \cref{item:pp-bc}, we have
	\(1 - \pp[h]{i}(T) = 1\).
	Moreover,
	\begin{equation*}
		1 - \pp[h]{i}(U) = (q^{L-h}t^{A+1})^{1 - \ind{d, b, a}} \frac{1 - t}{1 - q^{L-h}t^{A+2}},
	\end{equation*}
	where \(A = \arm(i+1, h+1)\), by \cref{item:pp-ac}.
	It also holds that
	\begin{equation*}
		\ind{b, a} + \ind{d, b} - \ind{b, b} - \ind{d, a} = \ind{d, b, a}.
	\end{equation*}
	For the major index, we have \(\maj[r + 1](T) - \maj[r + 1](U)\) equals \( (\ell+1) \ind{d, b, a}\).
	For the coinversion number,
	we have \(\coinv[r, r + 1](T) - \coinv[r, r + 1](U)\)
	equals \((A + 1) \ind{d, b, a}\).
	Finally, we have
	\begin{equation*}
		\left(
			\prod_{\substack{u \in \dg[r + 1]{\alpha} \\ T(u) \neq T(\south{u})}}
			\frac{1 - t}{1 - q^{\leg(u)+1} t^{ \arm(u)+1}}
		\right)
		\big/
		\left(
			\prod_{\substack{u \in \dg[r + 1]{\alpha} \\ U(u) \neq U(\south{u})}}
			\frac{1 - t}{1 - q^{\leg(u)+1} t^{\arm(u)+1}}
		\right)
		= \frac{(1 - t)}{1 - q^{\ell+1}t^{A+1}},
	\end{equation*}
	since \(u=(i, r+1)\) is the only relevant box.
	Therefore, we compute
	\begin{equation*}
		\frac{\wtqt[r](T)}{\wtqt[r](U)} =
		q^{(\ell+1)\ind{d, b, a}} t^{(A+1)\ind{d, b, a}} \frac{(1 - t)}{1 - q^{\ell+1}t^{A+1}}
		= \pp[r]{i}(T, U),
	\end{equation*}
	and thus \eqref{eq:wtqt-prob-balanced-h} holds for this case as well.

	Finally, the case \(|\{a, b, c, d\}| = 2\) does not happen as \(b \neq d\)
	by the non-attacking property of \(U\).
	Therefore, \eqref{eq:wtqt-prob-balanced-h} holds in all cases.
\end{proof}

We are now ready to prove the main result of this section, \cref{theorem:wt-prob}.

\begin{proof}[Proof of \cref{theorem:wt-prob}]
	Assume that \(U = \sswap[0, h]{i}(T)\) for some \(h \in \interval{0, \alpha_i}\).
	If such \(h\) doesn't exist, then \(\prob{i}(T, U) = \prob{i}(U, T) = 0\), and the lemma holds trivially.
	For all \(r \in \interval{h + 1, \alpha_i-1}\),
	since the \(r\)\textsuperscript{th} and \((r+1)\)\textsuperscript{st} rows of \(T\) and \(U\) are equal,
	we have
	\begin{equation} \label{eq:wtqt-prob-balanced-r-large}
		\wtqt[r](T) = \wtqt[r](U)
	\end{equation}
	By \cref{lemma:wtqt-prob-balanced-r}, for all \(r \in \interval{0, h-1}\), we have
	\begin{equation} \label{eq:wtqt-prob-balanced-r-repeated}
		\wtqt[r](U)\pp[r]{i}(U) = \wtqt[r](T)\pp[r]{i}(T),
	\end{equation}
	and by \cref{lemma:wtqt-prob-balanced-h}, we have
	\begin{equation} \label{eq:wtqt-prob-balanced-h-repeated}
		\wtqt[h](U)\left(1 - \pp[h]{i}(U)\right) = \wtqt[h](T)\left(1 - \pp[h]{i}(T)\right).
	\end{equation}
	Multiplying
	\eqref{eq:wtqt-prob-balanced-r-large} for each \(r \in \interval{h + 1, \alpha_i-1}\),
	\eqref{eq:wtqt-prob-balanced-r-repeated} for all \(r \in \interval{0, h-1}\), and
	\eqref{eq:wtqt-prob-balanced-h-repeated},
	we obtain
	\begin{equation*}
		\left( \prod_{r = 0}^{\alpha_i-1} \wtqt[r](U) \right)
		\left( \prod_{r = 0}^{h-1} \pp[r]{i}(U) \right)
		\left( 1 - \pp[h]{i}(U) \right)
		=
		\left( \prod_{r = 0}^{\alpha_i-1} \wtqt[r](T) \right)
		\left( \prod_{r = 0}^{h-1} \pp[r]{i}(T) \right)
		\left( 1 - \pp[h]{i}(T) \right).
	\end{equation*}
	By \eqref{eq:rthcomponent}, and \cref{definition:prob}, we obtain
	\begin{equation*}
		\wtqt(U)\prob{i}(U, T) = \wtqt(T)\prob{i}(T, U). \qedhere
	\end{equation*}
\end{proof}

Recall that \(\NAF(\alpha, \sigma, \beta)\) denotes the set of non-attacking fillings of shape \(\alpha\) with content \(\beta\).
Since \(\prob{i}(T, U) = 0\) whenever \(\mathbf{x}^T \neq \mathbf{x}^U\),
we may restrict the map in \cref{theorem:wt-prob}
to obtain a probabilistic bijection between
\(\NAF(\alpha, \sigma, \beta)\) and \(\NAF(\alpha, \sigma s_i, \beta)\).

\begin{corollary} \label{cor:wt-prob}
	The pair of maps
	\begin{gather*}
		{\prob{i}} \colon \NAF(\alpha, \sigma, \beta) \times \NAF(\alpha, \sigma s_i) \to \rationals(q, t) \\
		{\prob{i}} \colon \NAF(\alpha, \sigma s_i, \beta) \times \NAF(\alpha, \sigma) \to \rationals(q, t)
	\end{gather*}
	defines a probabilistic bijection between
	\(\NAF(\alpha, \sigma, \beta)\) and \(\NAF(\alpha, \sigma s_i, \beta)\)
	with respect to the weight function \(\wtqt\).
\end{corollary}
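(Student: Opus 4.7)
The plan is to show that the restriction is well-defined and inherits both the probability-map property and the detailed-balance equation from \cref{theorem:wt-prob}. The key observation is that the swap operators $\swap[r]{i}$ only exchange entries between two boxes, so $\sswap[0,h]{i}$ preserves the multiset of entries in $\dg(\alpha)$ and hence preserves the content. Therefore, whenever $U = \sswap[0,h]{i}(T)$, we have $\content(T)=\content(U)$, which means $\prob{i}(T, U)$ is zero unless $T$ and $U$ have the same content.

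First I would verify that if $T \in \NAF(\alpha, \sigma, \beta)$ and $U \in \NAF(\alpha, \sigma s_i)$ with $\prob{i}(T, U) \neq 0$, then $U \in \NAF(\alpha, \sigma s_i, \beta)$. This is immediate from \cref{definition:prob}: a nonzero value of $\prob{i}(T, U)$ forces $U = \sswap[0,h]{i}(T)$ for some $h$, and swapping entries preserves content.

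Next I would check that the restriction is a probability map. For $T \in \NAF(\alpha, \sigma, \beta)$,
\begin{equation*}
	\sum_{U \in \NAF(\alpha, \sigma s_i, \beta)} \prob{i}(T, U)
	= \sum_{U \in \NAF(\alpha, \sigma s_i)} \prob{i}(T, U) = 1,
\end{equation*}
where the first equality holds because all the zeroed terms in the content-restricted sum are precisely those with $\content(U) \neq \beta$, and the second equality is the probability-map property established for the unrestricted $\prob{i}$ on $\NAF(\alpha, \sigma) \times \NAF(\alpha, \sigma s_i)$. The symmetric statement for the other direction follows identically.

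Finally, the detailed-balance condition $\wtqt(U)\prob{i}(U, T) = \wtqt(T)\prob{i}(T, U)$ holds for every pair $(T, U) \in \NAF(\alpha, \sigma, \beta) \times \NAF(\alpha, \sigma s_i, \beta)$ as a direct consequence of \cref{theorem:wt-prob}, since the equation already holds on the larger sets $\NAF(\alpha, \sigma) \times \NAF(\alpha, \sigma s_i)$ of which these are subsets. There is no real obstacle here; the only thing to be careful about is making sure the content-preservation argument is explicit, since this is what guarantees the restricted sums still equal $1$ rather than something smaller.
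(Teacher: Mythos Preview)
Your proposal is correct and follows exactly the approach the paper takes: the paper simply notes that \(\prob{i}(T,U)=0\) whenever \(\mathbf{x}^T\neq\mathbf{x}^U\) (i.e., content is preserved by the swaps \(\sswap[0,h]{i}\)), and then states the corollary without further proof. You have just made the content-preservation and probability-map arguments explicit, which is fine.
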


\begin{example}
	Let \(n = 4\),
	\(\alpha = \composition{2, 2, 0, 1}\),
	\(\sigma = \window{3, 1, 2, 4}\),
	\(\beta = \composition{1, 0, 2, 2}\).
	Let \(i = 1\) and note that \(\alpha_1 = \alpha_2 = 2\).
	Moreover, note that \(\sigma s_i = \window{1, 3, 2, 4}\).
	\cref{cor:wt-prob} implies that the maps
	\( \prob{1} \colon \NAF(\alpha, \sigma, \beta) \times \NAF(\alpha, \sigma s_1, \beta) \to \rationals(q, t)\)
	and
	\( \prob{1} \colon \NAF(\alpha, \sigma s_1, \beta) \times \NAF(\alpha, \sigma, \beta) \to \rationals(q, t)\)
	form a probabilistic bijection.

	The set
	\(\NAF(\alpha, \sigma, \beta) = \NAF(\composition{2, 2, 0, 1}, \window{3, 1, 2, 4}, \composition{1, 0, 2, 2})\)
	consists of three fillings,
	\(\textcolor{red!50!black}{T_1}\),
	\(\textcolor{red!50!black}{T_2}\), and
	\(\textcolor{red!50!black}{T_3}\),
	and the set
	\(\NAF(\alpha, \sigma s_i, \beta) = \NAF(\composition{2, 2, 0, 1}, \window{1, 3, 2, 4}, \composition{1, 0, 2, 2})\) 
	consists of two fillings,
	\(\textcolor{blue!50!black}{U_1}\) and
	\(\textcolor{blue!50!black}{U_2}\),
	shown in the left and right sides of \cref{fig:prob-2201-3124-1022}, respectively.
	\cref{fig:prob-2201-3124-1022} shows the values of
	\(\prob{1}(T, U)\) and \(\prob{1}(U, T)\) for each pair of fillings.

	\begin{figure}[htbp]
		\begin{tikzpicture}
			\node[red!50!black]  (T1) at (-4,  3) {
				\(T_1 =
				\begin{ytableau}
					2 & 4 \\
					1 & 2 & \none & 4 \\
					3 & 1 & 2 & 4
				\end{ytableau}\)
			};
			\node[red!50!black]  (T2) at (-4,  0) {
				\(T_2 =
				\begin{ytableau}
					2 & 4 \\
					2 & 1 & \none & 4 \\
					3 & 1 & 2 & 4
				\end{ytableau}\)
			};
			\node[red!50!black]  (T3) at (-4, -3) {
				\(T_3 =
				\begin{ytableau}
					4 & 2 \\
					1 & 2 & \none & 4 \\
					3 & 1 & 2 & 4
				\end{ytableau}\)
			};
			\node[blue!50!black] (U1) at ( 4,  1.5) {
				\(\begin{ytableau}
					2 & 4 \\
					1 & 2 & \none & 4 \\
					1 & 3 & 2 & 4
				\end{ytableau} = U_1\)
			};
			\node[blue!50!black] (U2) at ( 4, -1.5) {
				\(\begin{ytableau}
					4 & 2 \\
					1 & 2 & \none & 4 \\
					1 & 3 & 2 & 4
				\end{ytableau} = U_2\)
			};
			\node[red!50!black, below] at (T1.south) {
				\(\scriptstyle \wtqt(T_1) =
				\frac{q^{4} {(1 - t)}^{4} t^{3}}{{(1 - q^{2} t^{3})} {(1 - q^{2} t^{2})} {(1 - q t^{2})} {(1 - q t)}}\)
			};
			\node[red!50!black, below] at (T2.south) {
				\(\scriptstyle \wtqt(T_2) =
				\frac{q {(1 - t)}^{2}}{{(1 - q^{2} t^{3})} {(1 - q t)}}\)
			};
			\node[red!50!black, below] at (T3.south) {
				\(\scriptstyle \wtqt(T_3) =
				\frac{q^{3} t^{2} {(1 - t)}^{3} }{{(1 - q^{2} t^{3})} {(1 - q^{2} t^{2})} {(1 - q t^{2})}}\)
			};
			\node[blue!50!black, below] at (U1.south) {
				\(\scriptstyle \wtqt(U_1) =
				\frac{q^{2} {(1 - t)}^{3} t}{{(1 - q^{2} t^{2})} {(1 - q t^{2})} {(1 - q t)}}\)
			};
			\node[blue!50!black, below] at (U2.south) {
				\(\scriptstyle \wtqt(U_2) =
				\frac{q {(1 - t)}^{2}}{{(1 - q^{2} t^{2})} {(1 - q t^{2})}}\)
			};
			\draw[-{Latex}, red!50!black] (T1) 
				edge[bend left=15] node[sloped, fill=white]{\(1\)} (U1);
			\draw[-{Latex}, blue!50!black] (U1) 
				edge[bend left=0] node[sloped, fill=white]{\(\tfrac{q^2t^2(1-t)}{1-q^2t^3}\)} (T1);
			\draw[-{Latex}, red!50!black] (T2) 
				edge[bend left=10] node[sloped, fill=white]{\(\frac{qt(1-t)}{1-qt^2}\)} (U1);
			\draw[-{Latex}, blue!50!black] (U1) 
				edge[bend left=6] node[sloped, fill=white]{\(\frac{1-q^2t^2}{1-q^2t^3}\)} (T2);
			\draw[-{Latex}, red!50!black] (T2) 
				edge[bend left=6] node[sloped, fill=white]{\(\frac{qt-1}{1-qt^2}\)} (U2);
			\draw[-{Latex}, blue!50!black] (U2) 
				edge[bend left=10] node[sloped, fill=white]{\(\frac{1-q^2t^2}{1-q^2t^3}\)} (T2);
			\draw[-{Latex}, red!50!black] (T3) 
				edge[bend left=0] node[sloped, fill=white]{\(1\)} (U2);
			\draw[-{Latex}, blue!50!black] (U2) 
				edge[bend left=15] node[sloped, fill=white]{\(\tfrac{q^2t^2(1-t)}{1-q^2t^3}\)} (T3);
		\end{tikzpicture}
		\caption{
			For each \(T \in \NAF(\composition{2, 2, 0, 1}, \window{3, 1, 2, 4}, \composition{1, 0, 2, 2})\)
			(right, in red),
			and each \(U \in \NAF(\composition{2, 2, 0, 1}, \window{1, 3, 2, 4}, \composition{1, 0, 2, 2})\)
			(left, in blue),
			the label of the red arrow from \(T\) to \(U\) is the value of \(\prob{1}(T, U)\),
			and the label of the blue arrow from \(U\) to \(T\) is the value of \(\prob{1}(U, T)\).
			The lack of an arrow indicates that the value is zero.
			Below each filling, we show the value of \(\wtqt\).
		}
		\label{fig:prob-2201-3124-1022}
	\end{figure}

	The fact that the maps form a probabilistic bijection can be checked
	by computing that, for each
	\(T \in \{\textcolor{red!50!black}{T_1}, \textcolor{red!50!black}{T_2}, \textcolor{red!50!black}{T_3}\}\)
	and each
	\(U \in \{\textcolor{blue!50!black}{U_1}, \textcolor{blue!50!black}{U_2}\}\),
	we have
	\begin{equation*}
		\prob{1}(T, U) \wtqt(T) = \prob{1}(U, T) \wtqt(U).
	\end{equation*}
	Moreover, using the fact that there exists a probabilistic bijection between
	\(\{\textcolor{red!50!black}{T_1}, \textcolor{red!50!black}{T_2}, \textcolor{red!50!black}{T_3}\}\)
	and
	\(\{\textcolor{blue!50!black}{U_1}, \textcolor{blue!50!black}{U_2}\}\),
	\cref{theorem:wt-prob} implies that their \(q, t\)-weight generating functions are equal,
	that is,
	\begin{equation*}
		\wtqt(\textcolor{red!50!black}{T_1})
		+ \wtqt(\textcolor{red!50!black}{T_2})
		+ \wtqt(\textcolor{red!50!black}{T_3})
		= \wtqt(\textcolor{blue!50!black}{U_1})
		+ \wtqt(\textcolor{blue!50!black}{U_2}).
	\end{equation*}
	This equality can be checked from the known values of \(\wtqt\) in \cref{fig:prob-2201-3124-1022}.
	Note that the left-hand side is the coefficient
	\([\mathbf{x}^{\composition{1, 0, 2, 2}}] E_{\composition{2, 2, 0, 1}}^{\window{3, 1, 2, 4}}(\mathbf{x}; q, t)\)
	and the right-hand side is the coefficient
	\([\mathbf{x}^{\composition{1, 0, 2, 2}}] E_{\composition{2, 2, 0, 1}}^{\window{1, 3, 2, 4}}(\mathbf{x}; q, t)\).
\end{example}

\subsection{Bijective proof of \texorpdfstring{\cref{theorem:main}}{symmetry theorem}}
\label{sec:proof}

We now have the tools to give a bijective proof for the main result of this article,
\cref{theorem:main}, a symmetry theorem for the permuted-basement Macdonald polynomials.
Let's recall the statement.

\main*

\begin{proof}
	We write
	\begin{equation*}
		E_{\alpha}^{\sigma}(\mathbf{x}; q, t)
		= \sum_{T \in \NAF(\alpha, \sigma)} \mkern-9mu \wt(T)
		= \sum_{\text{compositions }\beta} \mkern-9mu x^\beta
			\left(\sum_{T \in \NAF(\alpha, \sigma, \beta)} \mkern-9mu \wtqt(T)\right).
	\end{equation*}
	For each composition \(\beta = \composition{\beta_1, \ldots, \beta_n}\),
	\cref{cor:wt-prob} implies that there exists a probabilistic bijection between
	\(\NAF(\alpha, \sigma, \beta)\) and \(\NAF(\alpha, \sigma s_i, \beta)\)
	with respect to \(\wtqt\).
	Hence, \cref{proposition:equal-weight-sum} implies that
	\begin{equation*}
		[x^\beta] E_{\alpha}^{\sigma}(\mathbf{x}; q, t)
		= \sum_{T \in \NAF(\alpha, \sigma, \beta)} \mkern-9mu \wtqt(T)
		= \sum_{U \in \NAF(\alpha, \sigma s_i, \beta)} \mkern-9mu \wtqt(U)
		= [x^\beta] E_{\alpha}^{\sigma s_i}(\mathbf{x}; q, t),
	\end{equation*}
	and the result follows.
\end{proof}

Given a composition \(\alpha=\composition{\alpha_1,\ldots,\alpha_n}\),
let \(\mathscr{C}_\alpha\) be the subgroup of \(\sym{n}(\alpha)\)
generated by the transpositions \(\{s_i \suchthat \alpha_i = \alpha_{i+1}\}\).
Writing \(\alpha = p_1^{m_1} p_2^{m_2} \cdots p_k^{m_k}\),
we have that
\begin{equation*}
	\mathscr{C}_\alpha =
	\sym{\interval{1, m_1}} \times
	\sym{\interval{m_1+1, m_1+m_2}} \times
	\cdots \times
	\sym{\interval{m_1+m_2+\cdots+m_{k-1}+1, n}}
	\subseteq \sym{n}(\alpha).
\end{equation*}

For example, if \(\alpha =\composition{1, 1, 2, 2, 1, 3} = 1^2 2^2 1^1 3^1\),
then \(\mathscr{C}_\alpha\) is the subgroup of \(\sym{6}(\alpha)\) generated by
the transpositions \(s_1\) and \(s_3\):
\begin{equation*}
	\mathscr{C}_\alpha =
	\{
		\window{1, 2, 3, 4, 5, 6},
		\window{2, 1, 3, 4, 5, 6},
		\window{1, 2, 4, 3, 5, 6},
		\window{4, 1, 4, 3, 5, 6}
	\}.
\end{equation*}
The group \(\sym{6}(\alpha)\) is generated by \(\{s_1,s_3,s_{2,5}\}\),
where \(s_{2,5}=s_2s_3s_4s_3s_2\).
For instance, \(\window{5, 2, 3, 4, 1, 6}\in\sym{n}(\alpha)\setminus \mathscr{C}_\alpha\).

\begin{corollary}
	\label{corollary:C-alpha-symmetry}
	Let \(\alpha=\composition{\alpha_1,\ldots,\alpha_n}\) be a composition.
	Then, if \(\sigma, \tau \in \sym{n}\) are such that
	\(\sigma^{-1} \tau \in \mathscr{C}_\alpha\), then
	\begin{equation*}
		E_{\alpha}^{\sigma}(\mathbf{x}; q, t) = E_{\alpha}^{\tau}(\mathbf{x}; q, t).
	\end{equation*}
\end{corollary}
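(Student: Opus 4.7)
The plan is to deduce this corollary directly from \cref{theorem:main} by an induction over the generators of \(\mathscr{C}_\alpha\). The key observation is that \(\mathscr{C}_\alpha\) is defined to be generated precisely by the simple transpositions \(\{s_i \suchthat \alpha_i = \alpha_{i+1}\}\), which are exactly those to which \cref{theorem:main} applies for the fixed shape \(\alpha\).

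First, I would write \(c := \sigma^{-1}\tau \in \mathscr{C}_\alpha\) as a (not necessarily reduced) word \(c = s_{i_1} s_{i_2} \cdots s_{i_k}\) in these generators, so that each index satisfies \(\alpha_{i_j} = \alpha_{i_j+1}\) and \(\tau = \sigma s_{i_1} s_{i_2} \cdots s_{i_k}\). Then I would proceed by induction on \(k\): the base case \(k=0\) gives \(\tau = \sigma\), which is trivial. For the inductive step, set \(\tau' = \sigma s_{i_1} \cdots s_{i_{k-1}}\), so that \(\tau = \tau' s_{i_k}\). By the inductive hypothesis, \(E_\alpha^\sigma = E_\alpha^{\tau'}\); applying \cref{theorem:main} to the permutation \(\tau'\) and the index \(i_k\) (its hypothesis holds because \(\alpha_{i_k} = \alpha_{i_k+1}\)) then yields \(E_\alpha^{\tau'} = E_\alpha^{\tau' s_{i_k}} = E_\alpha^\tau\). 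Chaining the two equalities completes the induction.

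All substantive work has already been done in establishing \cref{theorem:main}; the only content added here is the observation that every element of \(\mathscr{C}_\alpha\) can be reached from the identity by a sequence of allowed swaps, so there is no real obstacle. Since the shape \(\alpha\) is held fixed throughout the induction and each generator of \(\mathscr{C}_\alpha\) satisfies the required equality of adjacent parts by design, the hypothesis of \cref{theorem:main} is automatically satisfied at every step, and the argument is essentially pure bookkeeping on top of the main theorem.
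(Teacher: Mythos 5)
Your proof is correct and follows the same route as the paper: the paper likewise reduces by induction to the case \(\sigma^{-1}\tau = s_i\) with \(\alpha_i = \alpha_{i+1}\) and then invokes \cref{theorem:main}. You have merely spelled out the bookkeeping of the induction on the word length in the generators of \(\mathscr{C}_\alpha\), which the paper leaves implicit.
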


\begin{proof}
	By induction,
	it suffices to prove the statement for \(\sigma^{-1} \tau = s_i\)
	for some \(i\) such that \(\alpha_i = \alpha_{i+1}\).
	Then, the result follows from \cref{theorem:main}.
\end{proof}

If \(\alpha = p_1^{m_1} p_2^{m_2} \cdots p_k^{m_k}\),
then \(\mathscr{C}_\alpha = \sym{n}(\alpha)\)
if and only if \(p_1, p_2, \ldots, p_k\) are pairwise distinct.
In particular,
this is the case when \(\alpha\) is a partition or an antipartition.

\begin{corollary} \label{corollary:partition-antipartition-symmetry}
	Let \(\alpha = \composition{\alpha_1, \ldots, \alpha_n}\)
	be a partition or an antipartition.
	If \(\sigma, \tau \in \sym{n}\) are such that
	\(\sigma \permact \alpha = \tau \permact \alpha\),
	then
	\begin{equation*}
		E_{\alpha}^{\sigma}(\mathbf{x}; q, t) = E_{\alpha}^{\tau}(\mathbf{x}; q, t).
	\end{equation*}
\end{corollary}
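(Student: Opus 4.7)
The plan is to derive this corollary directly from \cref{corollary:C-alpha-symmetry} together with the observation, made in the paragraph immediately preceding the statement, that \(\mathscr{C}_\alpha\) coincides with the full stabilizer \(\sym{n}(\alpha)\) whenever \(\alpha\) is monotone. So the main task is to translate the hypothesis \(\sigma \permact \alpha = \tau \permact \alpha\) into the condition \(\sigma^{-1}\tau \in \mathscr{C}_\alpha\) demanded by \cref{corollary:C-alpha-symmetry}.

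First, I would rewrite \(\sigma \permact \alpha = \tau \permact \alpha\) as \(\sigma^{-1}\tau \in \sym{n}(\alpha)\), where \(\sym{n}(\alpha)\) is the stabilizer of \(\alpha\) under the left action. This is just the fact that the left action is a genuine group action: applying \(\sigma^{-1}\) to both sides of \(\sigma \permact \alpha = \tau \permact \alpha\) gives \((\sigma^{-1}\tau) \permact \alpha = \alpha\), as recorded in the associativity identity in the preliminaries.

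Next, I would invoke the identification \(\mathscr{C}_\alpha = \sym{n}(\alpha)\). This holds because, writing \(\alpha = p_1^{m_1} p_2^{m_2} \cdots p_k^{m_k}\) in exponent notation, the monotonicity of \(\alpha\) (weakly increasing if \(\alpha\) is an antipartition, weakly decreasing if it is a partition) forces \(p_1, p_2, \ldots, p_k\) to be pairwise distinct. Hence by the remark preceding the statement, \(\mathscr{C}_\alpha = \sym{n}(\alpha)\), and consequently \(\sigma^{-1}\tau \in \mathscr{C}_\alpha\). Applying \cref{corollary:C-alpha-symmetry} then yields \(E_\alpha^\sigma = E_\alpha^\tau\).

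There is no real obstacle here; the statement is essentially a repackaging of \cref{corollary:C-alpha-symmetry} in the special case of sorted shape, and the only substantive group-theoretic input is the identification of \(\mathscr{C}_\alpha\) with \(\sym{n}(\alpha)\), which was already justified immediately above the corollary.
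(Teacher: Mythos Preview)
Your proposal is correct and follows exactly the paper's argument: translate \(\sigma\permact\alpha=\tau\permact\alpha\) into \(\sigma^{-1}\tau\in\sym{n}(\alpha)\), note that monotonicity of \(\alpha\) gives \(\mathscr{C}_\alpha=\sym{n}(\alpha)\), and apply \cref{corollary:C-alpha-symmetry}. The paper's proof is essentially a one-line version of what you wrote.
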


\begin{proof}
	Since \(\sigma^{-1} \tau \in \sym{n}(\alpha)\),
	when \(\alpha\) is a partition or an antipartition,
	\(\sigma^{-1} \tau \in \mathscr{C}_\alpha\),
	and \cref{corollary:C-alpha-symmetry} applies.
\end{proof}

\section{Applications}\label{sec:applications}

Immediate applications of \cref{theorem:main} include the removal of certain assumptions from theorems in \cite{Ale19} and \cite{CMW22}.

\subsection{\texorpdfstring{\(t\)}{t}-atom polynomials and \texorpdfstring{\(t\)}{t}-key polynomials}
\label{sec:t-atoms}

Fix a composition \(\alpha = \composition{\alpha_1, \ldots, \alpha_n}\)
and a permutation \(\sigma \in \sym{n}\).
The \vocab{permuted basement \(t\)-atom polynomial of \(\alpha\) with basement \(\sigma\)},
first introduced in \cite{HLMvW11} and further studied in \cite{AS19}, is defined as
\begin{equation*}
	\atompoly_{\alpha}^{\sigma}(\mathbf{x}; t) = E_{\alpha}^{\sigma}(\mathbf{x}; 0, t).
\end{equation*}
Similarly, the \vocab{\(t\)-atom polynomial} \(\atompoly_{\alpha}(\mathbf{x}; t)\)
and the \vocab{\(t\)-key polynomial} \(\keypoly_{\alpha}(\mathbf{x}; t)\) are defined as
\begin{equation*}
	\atompoly_{\alpha}(\mathbf{x}; t) = \atompoly_{\alpha}^{\id}(\mathbf{x}; t)
	\qquad \text{and} \qquad
	\keypoly_{\alpha}(\mathbf{x}; t) = \atompoly_{\alpha}^{w_0}(\mathbf{x}; t).
\end{equation*}

\begin{proposition}[inferred from {\cite[Proposition~27]{Ale19}}]
	\label{proposition:atom-alpha-inc-alpha}
	Let \(\alpha = \composition{\alpha_1, \ldots, \alpha_n}\) be a composition.
	Let \(\sigma \in \sym{n}\) be the shortest permutation such that
	\(\sigma \permact \inc(\alpha) = \alpha\), and
	let \(\pi \in \sym{n}\) be the shortest permutation such that
	\(\pi \permact \dec(\alpha) = \alpha\).
	Then,
	\begin{equation*}
		\atompoly_{\alpha}(\mathbf{x}; t) = \atompoly_{\inc(\alpha)}^{\sigma}(\mathbf{x}; t)\qquad\text{and}\qquad
		\keypoly_{\alpha}(\mathbf{x}; t) = \atompoly_{\dec(\alpha)}^{\pi}(\mathbf{x}; t).
	\end{equation*}
\end{proposition}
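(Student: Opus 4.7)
The plan is to deduce both identities from Alexandersson's Proposition~27 (the result referenced in the statement) together with Corollary~\ref{corollary:partition-antipartition-symmetry}. The only difference between the proposition above and Alexandersson's original formulation lies in the choice of basement permutation: Alexandersson singles out a specific coset representative that need not coincide with the shortest one used here, so the task is to verify that any two permutations sending $\inc(\alpha)$ (resp.\ $\dec(\alpha)$) to $\alpha$ produce the same permuted-basement $t$-atom polynomial.

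For the atom identity, I would first invoke Alexandersson's Proposition~27 to obtain some permutation $\tau \in \sym{n}$ with $\tau \permact \inc(\alpha) = \alpha$ and $\atompoly_\alpha(\mathbf{x}; t) = \atompoly_{\inc(\alpha)}^{\tau}(\mathbf{x}; t)$. Since $\inc(\alpha)$ is an antipartition and both $\sigma$ and $\tau$ satisfy $\sigma \permact \inc(\alpha) = \tau \permact \inc(\alpha) = \alpha$, Corollary~\ref{corollary:partition-antipartition-symmetry} yields $E_{\inc(\alpha)}^{\sigma}(\mathbf{x}; q, t) = E_{\inc(\alpha)}^{\tau}(\mathbf{x}; q, t)$. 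Specializing $q = 0$ preserves this equality, giving $\atompoly_{\inc(\alpha)}^{\sigma}(\mathbf{x}; t) = \atompoly_{\inc(\alpha)}^{\tau}(\mathbf{x}; t)$, which combined with Alexandersson's identity proves the first half of the statement.

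The key identity is entirely analogous, with $\dec(\alpha)$ in the role of the relevant partition and $\pi$ in the role of the shortest sorting permutation. Alexandersson's Proposition~27 supplies some $\rho$ with $\rho \permact \dec(\alpha) = \alpha$ and $\keypoly_\alpha(\mathbf{x}; t) = \atompoly_{\dec(\alpha)}^{\rho}(\mathbf{x}; t)$; applying Corollary~\ref{corollary:partition-antipartition-symmetry} to the partition $\dec(\alpha)$ and the two representatives $\rho, \pi$, which lie in the same coset of the stabilizer of $\dec(\alpha)$, gives $\atompoly_{\dec(\alpha)}^{\rho}(\mathbf{x}; t) = \atompoly_{\dec(\alpha)}^{\pi}(\mathbf{x}; t)$.

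The principal obstacle will be reconciling notational conventions while transcribing Alexandersson's Proposition~27, since that paper uses a different convention for the left action of permutations on compositions and a different convention for indexing basements; one must carefully verify that Alexandersson's chosen representative does indeed send $\inc(\alpha)$ (resp.\ $\dec(\alpha)$) to $\alpha$ under the left action used in the present paper, so that $\sigma$ (resp.\ $\pi$) and the translated representative lie in the same coset. Once this translation is in hand, the rest of the deduction is a direct application of the symmetry corollary.
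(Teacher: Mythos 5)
The paper does not actually prove this proposition: it is presented purely as a citation, i.e.\ as a restatement of Alexandersson's Proposition~27 translated into the paper's conventions for the left action and for basements, and the ``shortest permutation'' hypothesis is there precisely to match the representative singled out in that source. Your proposal instead \emph{derives} the statement from a weaker reading of the citation (Alexandersson's identity for \emph{some} coset representative) plus \cref{corollary:partition-antipartition-symmetry}. That derivation is logically sound and not circular --- \cref{corollary:partition-antipartition-symmetry} descends from \cref{theorem:main}, which nowhere uses this proposition --- and your coset bookkeeping is correct, since for a partition or antipartition the stabilizer coincides with \(\mathscr{C}_{\dec(\alpha)}\) (resp.\ \(\mathscr{C}_{\inc(\alpha)}\)). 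But note two things. First, your argument is essentially identical to the paper's proof of the \emph{next} statement, \cref{lemma:atom-alpha-inc-alpha}: if your route is adopted, the intermediate proposition becomes redundant, because the same two ingredients immediately give the identity for \emph{any} sorting permutation, not just the shortest. Second, your route changes the logical status of the proposition from an external import into a consequence of the paper's new symmetry theorem; the paper deliberately keeps it independent so that \cref{lemma:atom-alpha-inc-alpha} is visibly a \emph{new} strengthening enabled by \cref{theorem:main}. The one substantive gap, which you flag yourself, is that you never pin down what representative Alexandersson's Proposition~27 actually uses after translation of conventions; if it is already the shortest permutation, the entire detour through \cref{corollary:partition-antipartition-symmetry} is unnecessary, and if it is not, then the attribution ``inferred from'' is carrying your symmetry argument implicitly. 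Either way the statement is true, but a complete write-up must resolve that translation explicitly rather than leave it as an ``obstacle.''
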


Using \cref{corollary:partition-antipartition-symmetry},
we can remove the length restriction on \(\sigma\) and \(\pi\) above.

\begin{corollary} \label{lemma:atom-alpha-inc-alpha}
	Let \(\alpha = \composition{\alpha_1, \ldots, \alpha_n}\) be a composition.
	Let \(\tau \in \sym{n}\) be any permutation such that
	\(\tau \permact \inc(\alpha) = \alpha\), and
	let \(\pi \in \sym{n}\) be any permutation such that
	\(\pi \permact \dec(\alpha) = \alpha\).
	Then,
	\begin{equation*}
		\atompoly_{\alpha}(\mathbf{x}; t) = \atompoly_{\inc(\alpha)}^{\tau}(\mathbf{x}; t)
		\qquad\text{and}\qquad
		\keypoly_{\alpha}(\mathbf{x}; t) = \atompoly_{\dec(\alpha)}^{\pi}(\mathbf{x}; t).
	\end{equation*}
\end{corollary}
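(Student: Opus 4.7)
The plan is to reduce the corollary to Proposition~\ref{proposition:atom-alpha-inc-alpha} (the ``shortest permutation'' version) via Corollary~\ref{corollary:partition-antipartition-symmetry}, which identifies permuted-basement Macdonald polynomials over a partition or antipartition shape whenever two basements induce the same composition on that shape.

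For the first identity, let $\sigma_0 \in \sym{n}$ denote the shortest permutation with $\sigma_0 \permact \inc(\alpha) = \alpha$, so that Proposition~\ref{proposition:atom-alpha-inc-alpha} gives $\atompoly_{\alpha}(\mathbf{x}; t) = \atompoly_{\inc(\alpha)}^{\sigma_0}(\mathbf{x}; t)$. Given any other $\tau \in \sym{n}$ with $\tau \permact \inc(\alpha) = \alpha$, we have $\tau \permact \inc(\alpha) = \sigma_0 \permact \inc(\alpha)$, and since $\inc(\alpha)$ is an antipartition, Corollary~\ref{corollary:partition-antipartition-symmetry} (applied with base composition $\inc(\alpha)$) yields
\begin{equation*}
	E_{\inc(\alpha)}^{\sigma_0}(\mathbf{x}; q, t) = E_{\inc(\alpha)}^{\tau}(\mathbf{x}; q, t).
\end{equation*}
Specializing $q = 0$ gives $\atompoly_{\inc(\alpha)}^{\sigma_0}(\mathbf{x}; t) = \atompoly_{\inc(\alpha)}^{\tau}(\mathbf{x}; t)$, and chaining this with the proposition produces the desired equality $\atompoly_{\alpha}(\mathbf{x}; t) = \atompoly_{\inc(\alpha)}^{\tau}(\mathbf{x}; t)$.

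The second identity is entirely analogous: let $\pi_0 \in \sym{n}$ be the shortest permutation with $\pi_0 \permact \dec(\alpha) = \alpha$, and apply Corollary~\ref{corollary:partition-antipartition-symmetry} to the partition $\dec(\alpha)$ to equate $E_{\dec(\alpha)}^{\pi_0}$ with $E_{\dec(\alpha)}^{\pi}$ for any $\pi$ satisfying $\pi \permact \dec(\alpha) = \alpha$; then specialize $q = 0$ and combine with Proposition~\ref{proposition:atom-alpha-inc-alpha}.

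There is no genuine obstacle: the entire content of the corollary is the removal of the length minimality on $\tau$ and $\pi$, and this is exactly what Corollary~\ref{corollary:partition-antipartition-symmetry} supplies, since $\inc(\alpha)$ and $\dec(\alpha)$ are precisely the cases in which the subgroup $\mathscr{C}_{\inc(\alpha)}$ (respectively $\mathscr{C}_{\dec(\alpha)}$) coincides with the full stabilizer. The nontrivial work has already been done in \cref{theorem:main} and its downstream consequences.
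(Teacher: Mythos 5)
Your proof is correct and follows essentially the same route as the paper: invoke \cref{proposition:atom-alpha-inc-alpha} for the shortest permutation, then use \cref{corollary:partition-antipartition-symmetry} on the antipartition $\inc(\alpha)$ (resp.\ partition $\dec(\alpha)$) to replace that basement with an arbitrary $\tau$ (resp.\ $\pi$) inducing the same composition, and specialize $q=0$. No discrepancies to note.
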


\begin{proof}
	Let \(\sigma \in \sym{n}\) be the shortest permutation such that \(\sigma \permact \inc(\alpha) = \alpha\).
	By Proposition \ref{proposition:atom-alpha-inc-alpha}, we have
	\(
		\atompoly_{\alpha}(\mathbf{x}; t)
		= \atompoly_{\inc(\alpha)}^{\sigma}(\mathbf{x}; t)
		= E_{\inc(\alpha)}^{\sigma}(\mathbf{x}; 0, t)
	\).
	Since \(\tau \permact \inc(\alpha) = \sigma \permact \inc(\alpha)\),
	\cref{corollary:partition-antipartition-symmetry} implies
	\(
		E_{\inc(\alpha)}^{\sigma}(\mathbf{x}; 0, t)
		= E_{\inc(\alpha)}^{\tau}(\mathbf{x}; 0, t)
		= \atompoly_{\inc(\alpha)}^{\tau}(\mathbf{x}; t)
	\).

	The proof for \(\keypoly_{\alpha}\) is analogous, and is omitted.
\end{proof}

\subsection{ASEP polynomials}\label{sec:mlqs}

In \cite{CMW22}, the authors introduce the ASEP polynomial \(F_\alpha(\mathbf{x}; q, t)\),
which are weight-generating functions for multiline queues with bottom row \(\alpha\).
A key result from their work is the following proposition.

\begin{proposition}[{\cite[Proposition~3.1]{CMW22}}] \label{proposition:multiline-queues}
	Let \(\alpha = \composition{\alpha_1, \ldots, \alpha_n}\) be a composition.
	Let \(\sigma \in \sym{n}\) be the longest permutation such that
	\(\sigma^{-1} \permact \alpha = \inc(\alpha)\).
	Then,
	\begin{equation*}
		F_{\alpha}(\mathbf{x}; q, t) = E_{\inc(\alpha)}^{\sigma}(\mathbf{x}; q, t).
	\end{equation*}
\end{proposition}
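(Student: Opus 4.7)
The statement is cited directly from \cite{CMW22}, so in principle the cleanest proof is simply to invoke that reference, but I would want to sketch how the identification actually happens using the vocabulary set up in this paper. The plan is to produce a weight-preserving bijection
\[
\Phi \colon \{\text{multiline queues with bottom row }\alpha\}
\;\longrightarrow\; \NAF(\inc(\alpha),\sigma),
\]
and then invoke \cref{thm:tableau-formula}. The permutation \(\sigma\) in the statement is forced: if we want \(\inc(\alpha)\) to be the shape but the labeling of columns to match the order of particles along the bottom row of the multiline queue, then the columns of \(\inc(\alpha)\) (which are weakly increasing in height) must be reindexed by a permutation \(\sigma\) with \(\sigma^{-1}\permact\alpha=\inc(\alpha)\); choosing the \emph{longest} such \(\sigma\) corresponds to breaking ties between equal-height columns in the canonical right-to-left order used by the multiline queue pairing rule.

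The construction of \(\Phi\) proceeds row by row. I would read each row of the MLQ (from bottom to top) and place, in row \(r\) of the shape \(\inc(\alpha)\), the label of the particle occupying the \(i\)th position of the \(r\)th row of the MLQ, in the column determined by \(\sigma\) applied to the bottom-row index. The non-attacking condition in \(\NAF(\inc(\alpha),\sigma)\) would then be read off from the fact that distinct particles in a single MLQ row carry distinct labels, and that the pairing rule between consecutive rows forbids the diagonal-attack configuration of \cref{fig:attacking}. Writing these checks out carefully is the routine but somewhat bookkeeping-heavy step of the proof.

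The key step, and the main obstacle, is matching the weight. The MLQ weight in \cite{CMW22} is a product of three types of factors: a \(q\)-weight accumulated from wrappings as pairings cross the boundary, a \(t\)-weight accumulated from unpaired (``free'') positions, and a rational prefactor for each non-trivial pairing. Under \(\Phi\) I would need to check:
\begin{enumerate}
\item each wrapping of a paired particle contributes \(q^{\leg(u)+1}\) and so the total \(q\)-exponent equals \(\sum_{u\in\Des(T)}(\leg(u)+1)=\maj(T)\);
\item each free position contributes a \(t\) exactly when the corresponding triple \((u,v,w)\) is a coinversion triple in the sense of \cref{fig:inversion-coinversion}, so the total \(t\)-exponent is \(\coinv(T)\);
\item the rational factor matches the product \(\prod\frac{1-t}{1-q^{1+\leg(u)}t^{1+\arm(u)}}\) over descent boxes.
\end{enumerate}
These three identifications, triple-by-triple, are the technical heart of the argument and are exactly what is verified in \cite[\S3]{CMW22}. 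With them in hand, \(\Phi\) is a weight-preserving bijection and the tableau formula \cref{thm:tableau-formula} gives \(F_\alpha(\mathbf{x};q,t)=E_{\inc(\alpha)}^\sigma(\mathbf{x};q,t)\).

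I would anticipate that the next step in the paper is to combine this proposition with \cref{corollary:partition-antipartition-symmetry} to remove the ``longest permutation'' hypothesis: since \(\inc(\alpha)\) is an antipartition, any \(\tau\) with \(\tau^{-1}\permact\alpha=\inc(\alpha)\) satisfies \(\sigma^{-1}\tau\in\mathscr{C}_{\inc(\alpha)}=\sym{n}(\inc(\alpha))\), so \(E_{\inc(\alpha)}^\sigma=E_{\inc(\alpha)}^\tau\) and the length restriction on \(\sigma\) in \cref{proposition:multiline-queues} can be dropped without further work.
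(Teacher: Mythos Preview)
Your sketch is a reasonable outline of the argument from \cite{CMW22}, but the paper itself does not prove this proposition at all: it is simply quoted as \cite[Proposition~3.1]{CMW22} with no accompanying argument, since it is background material rather than a result of this paper. Your anticipation of the follow-up is exactly right: the paper immediately combines this proposition with \cref{corollary:partition-antipartition-symmetry} to drop the length restriction on \(\sigma\), precisely as you describe.
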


By \cref{corollary:partition-antipartition-symmetry},
we can remove the length requirement on \(\sigma\).

\begin{corollary}
	\label{lemma:multiline-queues}
	Let \(\alpha=\composition{\alpha_1,\ldots,\alpha_n}\) be a composition, and let
	\(\tau \in \sym{n}\) such that \(\tau^{-1} \permact \alpha = \inc(\alpha)\).
	Then,
	\begin{equation*}
		F_{\alpha}(\mathbf{x}; q, t) = E_{\inc(\alpha)}^{\tau}(\mathbf{x}; q, t).
	\end{equation*}
\end{corollary}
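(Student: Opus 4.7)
The plan is to mirror the proof of Corollary~\ref{lemma:atom-alpha-inc-alpha} almost verbatim, substituting Proposition~\ref{proposition:multiline-queues} for Proposition~\ref{proposition:atom-alpha-inc-alpha}. Let $\sigma \in \sym{n}$ be the longest permutation with $\sigma^{-1} \permact \alpha = \inc(\alpha)$. By Proposition~\ref{proposition:multiline-queues}, we immediately obtain $F_{\alpha}(\mathbf{x}; q, t) = E_{\inc(\alpha)}^{\sigma}(\mathbf{x}; q, t)$. The task then reduces to showing $E_{\inc(\alpha)}^{\sigma}(\mathbf{x}; q, t) = E_{\inc(\alpha)}^{\tau}(\mathbf{x}; q, t)$ for the arbitrary $\tau$ provided in the hypothesis.

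Next, I would rewrite the hypothesis in the form needed to invoke Corollary~\ref{corollary:partition-antipartition-symmetry}. From $\sigma^{-1} \permact \alpha = \inc(\alpha) = \tau^{-1} \permact \alpha$, applying $\sigma$ and $\tau$ respectively (using $(\sigma\pi)\permact\beta = \sigma\permact(\pi\permact\beta)$) gives
\begin{equation*}
	\sigma \permact \inc(\alpha) = \alpha = \tau \permact \inc(\alpha).
\end{equation*}
Since $\inc(\alpha)$ is an antipartition, Corollary~\ref{corollary:partition-antipartition-symmetry} applies with the underlying composition being $\inc(\alpha)$ and the two permutations $\sigma$ and $\tau$, yielding $E_{\inc(\alpha)}^{\sigma}(\mathbf{x}; q, t) = E_{\inc(\alpha)}^{\tau}(\mathbf{x}; q, t)$. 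Chaining this with the equality from Proposition~\ref{proposition:multiline-queues} gives the desired identity.

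There is essentially no obstacle here beyond setting things up: all the real work has already been done in proving Theorem~\ref{theorem:main} (and hence Corollary~\ref{corollary:partition-antipartition-symmetry}). The only point requiring care is the bookkeeping of the left action versus its inverse, since the statement of Proposition~\ref{proposition:multiline-queues} is phrased in terms of $\sigma^{-1}\permact\alpha$ while Corollary~\ref{corollary:partition-antipartition-symmetry} is phrased in terms of $\sigma\permact\alpha$; the translation between these is immediate from $\sigma\permact(\sigma^{-1}\permact\beta)=\beta$.
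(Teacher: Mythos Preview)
Your proposal is correct and follows essentially the same approach as the paper: invoke Proposition~\ref{proposition:multiline-queues} to get the identity for the specific longest permutation \(\sigma\), then use Corollary~\ref{corollary:partition-antipartition-symmetry} (applied to the antipartition \(\inc(\alpha)\)) to pass from \(\sigma\) to an arbitrary \(\tau\). The paper's own write-up is terser, but your added bookkeeping translating \(\sigma^{-1}\permact\alpha=\inc(\alpha)\) into \(\sigma\permact\inc(\alpha)=\alpha\) is exactly the implicit step.
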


This allows us to realize \(t\)-atom polynomials as specializations of ASEP polynomials.

\begin{corollary}
	Let \(\alpha = \composition{\alpha_1, \ldots, \alpha_n}\) be a composition.
	Then,
	\begin{equation*}
		F_{\alpha}(\mathbf{x}; 0, t) = \atompoly_{\alpha}(\mathbf{x}; t).
	\end{equation*}
\end{corollary}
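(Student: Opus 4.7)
The plan is to combine \cref{lemma:multiline-queues} and \cref{lemma:atom-alpha-inc-alpha}, both of which express their respective polynomials as permuted-basement Macdonald polynomials with basement shape \(\inc(\alpha)\). The key point is that the hypotheses on the permutation in those two corollaries are mutually equivalent, so a single choice of permutation serves both representations.

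First, I would fix any permutation \(\tau \in \sym{n}\) with \(\tau \permact \inc(\alpha) = \alpha\) (for instance, the longest such \(\tau\)). Applying \(\tau^{-1}\) to both sides of this equation shows it is equivalent to the condition \(\tau^{-1} \permact \alpha = \inc(\alpha)\), so \(\tau\) satisfies the hypothesis of \cref{lemma:multiline-queues} as well. Consequently,
\begin{equation*}
	F_\alpha(\mathbf{x}; q, t) = E_{\inc(\alpha)}^\tau(\mathbf{x}; q, t).
\end{equation*}

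Next, I would specialize this identity at \(q = 0\), obtaining
\begin{equation*}
	F_\alpha(\mathbf{x}; 0, t) = E_{\inc(\alpha)}^\tau(\mathbf{x}; 0, t) = \atompoly_{\inc(\alpha)}^\tau(\mathbf{x}; t),
\end{equation*}
by the definition of the permuted basement \(t\)-atom polynomial. Finally, \cref{lemma:atom-alpha-inc-alpha} applied to the same \(\tau\) identifies the right-hand side with \(\atompoly_\alpha(\mathbf{x}; t)\), which is the desired equality.

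There is essentially no obstacle here: all of the substantive combinatorial content has already been absorbed into \cref{theorem:main} and its consequence \cref{corollary:partition-antipartition-symmetry}, which are precisely what allowed us to drop the length hypotheses on the permutations in \cref{lemma:atom-alpha-inc-alpha,lemma:multiline-queues}. The only mild subtlety worth stating explicitly is the inversion \(\tau \permact \inc(\alpha) = \alpha \iff \tau^{-1} \permact \alpha = \inc(\alpha)\), which reconciles the two different-looking hypotheses, so that a single \(\tau\) can be substituted into both corollaries.
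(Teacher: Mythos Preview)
Your proposal is correct and follows essentially the same approach as the paper: pick a permutation \(\tau\) satisfying \(\tau \permact \inc(\alpha) = \alpha\) (equivalently \(\tau^{-1} \permact \alpha = \inc(\alpha)\)), apply \cref{lemma:multiline-queues}, specialize at \(q=0\), and invoke \cref{lemma:atom-alpha-inc-alpha}. The only difference is that you spell out the equivalence of the two hypotheses on \(\tau\), which the paper leaves implicit.
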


\begin{proof}
	Let \(\tau \in \sym{n}\) such that \(\tau^{-1} \permact \alpha = \inc(\alpha)\).
	From \cref{lemma:multiline-queues}
	and \cref{lemma:atom-alpha-inc-alpha}, we have
	\(
		F_{\alpha}(\mathbf{x}; 0, t)
		= E_{\inc(\alpha)}^{\tau}(\mathbf{x}; 0, t)
		= \atompoly_{\inc(\alpha)}^{\tau}(\mathbf{x}; t)
		= \atompoly_{\alpha}(\mathbf{x}; t)
	\).
\end{proof}

Finally, we rewrite \eqref{eq:P in Es} as follows.
\begin{corollary}
	Let \(\lambda=(\lambda_1,\ldots,\lambda_n)\) be a partition.
	The symmetric Macdonald polynomial satisfies
	\begin{equation*}
		P_\lambda(\mathbf{x};q,t)=\sum_{\mu\in S_n\cdot\lambda}E_{\inc(\lambda)}^{\sigma_\mu}(\mathbf{x};q,t),
	\end{equation*}
	where \(\sigma_\mu\in S_n\) is any permutation such that \(\sigma_\mu\permact \inc(\lambda)=\mu\).
\end{corollary}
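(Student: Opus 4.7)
The plan is to derive this directly from \eqref{eq:P in Es} together with \cref{corollary:partition-antipartition-symmetry}. Equation \eqref{eq:P in Es} already gives the identity, but with the restrictive choice that \(\sigma_\mu\) is the \emph{longest} permutation satisfying \(\sigma_\mu \permact \inc(\lambda) = \mu\). The corollary to prove asserts that any such permutation works, so the only thing needed is to show the individual summands \(E_{\inc(\lambda)}^{\sigma_\mu}(\mathbf{x};q,t)\) do not depend on the choice of representative.

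First, I would fix a rearrangement \(\mu \in S_n \permact \lambda\) and denote by \(\sigma_\mu^{\max}\) the longest permutation with \(\sigma_\mu^{\max} \permact \inc(\lambda) = \mu\) (the one used in \eqref{eq:P in Es}). Then, given any other \(\sigma_\mu \in \sym{n}\) with \(\sigma_\mu \permact \inc(\lambda) = \mu\), I would observe that \(\inc(\lambda)\) is by definition an antipartition, so the hypothesis of \cref{corollary:partition-antipartition-symmetry} applies to the pair \(\sigma_\mu^{\max}, \sigma_\mu\): both act on the same antipartition to yield the same composition \(\mu\). Applying that corollary gives
\begin{equation*}
    E_{\inc(\lambda)}^{\sigma_\mu^{\max}}(\mathbf{x};q,t) = E_{\inc(\lambda)}^{\sigma_\mu}(\mathbf{x};q,t).
\end{equation*}

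Finally, I would substitute this equality termwise into \eqref{eq:P in Es} to obtain the claimed formula with arbitrary choices of \(\sigma_\mu\). There is no real obstacle here; the content of the corollary is entirely absorbed by \cref{corollary:partition-antipartition-symmetry}, which itself rests on \cref{theorem:main}. If anything, the only point requiring a sentence of care is the remark that \(\inc(\lambda)\) is weakly increasing (hence an antipartition) so that \cref{corollary:partition-antipartition-symmetry} is available, as opposed to \cref{corollary:C-alpha-symmetry} which would require additional hypotheses on the stabilizer.
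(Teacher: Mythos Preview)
Your proposal is correct and matches the paper's approach: the paper simply states the corollary as a rewriting of \eqref{eq:P in Es} without giving an explicit proof, relying implicitly on \cref{corollary:partition-antipartition-symmetry} in exactly the way you describe. Your extra care in noting that \(\inc(\lambda)\) is an antipartition is appropriate and fills in the one-line justification the paper omits.
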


\printbibliography

@article{Ale19,
	title        = {Non-symmetric {M}acdonald polynomials and {D}emazure--Lusztig operators},
	author       = {Alexandersson, Per},
	year         = {2019},
	volume       = {76},
	issn         = {1286-4889},
	pages        = {Art. B76d, 27},
	journaltitle = {Séminaire Lotharingien de Combinatoire},
	shortjournal = {Sém. Lothar. Combin.},
	url          = {https://www.mat.univie.ac.at/~slc/wpapers/s76alexand.html}
}

@misc{Man24,
	title        = {A compact formula for the symmetric {M}acdonald polynomials},
	author       = {Mandelshtam, Olya},
	year         = {2024},
	eprinttype   = {arxiv},
	eprint       = {2401.17223},
	primaryclass = {math.CO}
}

@thesis{Fer11,
	type        = {phdthesis},
	title       = {Row-strict quasisymmetric {S}chur functions, characterizations of {D}emazure atoms, and permuted basement nonsymmetric {M}acdonald polynomials},
	pagetotal   = {90},
	institution = {University of California, Davis},
	location    = {California, United States},
	author      = {Ferreira, Jeffrey Paul},
	year        = {2011},
	url         = {https://www.proquest.com/docview/940887941}
}

@article{CMW22,
	title        = {From multiline queues to {M}acdonald polynomials via the exclusion process},
	volume       = {144},
	issn         = {0002-9327},
	doi          = {10.1353/ajm.2022.0007},
	pages        = {395--436},
	number       = {2},
	journaltitle = {American Journal of Mathematics},
	shortjournal = {Am. J. Math.},
	author       = {Corteel, Sylvie and Mandelshtam, Olya and Williams, Lauren},
	year         = {2022}
}

@article{HHL08,
	title        = {A combinatorial formula for nonsymmetric {M}acdonald polynomials},
	volume       = {130},
	issn         = {1080-6377},
	url          = {https://muse.jhu.edu/pub/1/article/235639},
	pages        = {359--383},
	number       = {2},
	journaltitle = {American Journal of Mathematics},
	shortjournal = {Amer. J. Math.},
	author       = {Haglund, Jim and Haiman, Mark and Loehr, Nick},
	year         = {2008},
	publisher    = {Johns Hopkins University Press}
}

@article{CHMMW22,
	title        = {Compact formulas for {M}acdonald polynomials and quasisymmetric {M}acdonald polynomials},
	author       = {Corteel, Sylvie and Haglund, Jim and Mandelshtam, Olya and Mason, Sarah and Williams, Lauren},
	volume       = {28},
	issn         = {1420-9020},
	doi          = {10.1007/s00029-021-00721-7},
	pages        = {32},
	number       = {2},
	journaltitle = {Selecta Mathematica},
	shortjournal = {Sel. Math. New Ser.},
	year         = {2022}
}

@article{AF22,
	title        = {$q${RS}$t$: A probabilistic {R}obinson--{S}chensted correspondence for {M}acdonald polynomials},
	volume       = {2022},
	issn         = {1073-7928, 1687-0247},
	doi          = {10.1093/imrn/rnab083},
	pages        = {13505--13568},
	number       = {17},
	journaltitle = {International Mathematics Research Notices},
	author       = {Frieden, Gabriel and Schreier-Aigner, Florian},
	year         = {2022}
}

@article{BP19,
	title        = {{Y}ang--{B}axter field for spin {H}all--{L}ittlewood symmetric functions},
	volume       = {7},
	issn         = {2050-5094},
	url          = {https://www.cambridge.org/core/journals/forum-of-mathematics-sigma/article/yangbaxter-field-for-spin-halllittlewood-symmetric-functions/E9B9313A0C7A73BBC09097E3715B5FF2},
	doi          = {10.1017/fms.2019.36},
	pages        = {e39},
	journaltitle = {Forum of Mathematics, Sigma},
	author       = {Bufetov, Alexey and Petrov, Leonid},
	year         = {2019}
}

@article{GR22,
	title        = {Comparing formulas for type ${GL}_n$ {M}acdonald polynomials},
	author       = {Guo, Weiying and Ram, Arun},
	year         = {2022},
	journaltitle = {Algebraic Combinatorics},
	shortjournal = {Algebr. Comb.},
	number       = {5},
	volume       = {5},
	pages        = {849--883},
	issn         = {2589-5486},
	doi          = {10.5802/alco.227}
}

@article{Che95,
	title        = {Nonsymmetric {M}acdonald polynomials},
	volume       = {1995},
	issn         = {1073-7928},
	url          = {https://doi.org/10.1155/S1073792895000341},
	doi          = {10.1155/S1073792895000341},
	pages        = {483--515},
	number       = {10},
	journaltitle = {International Mathematics Research Notices},
	shortjournal = {International Mathematics Research Notices},
	author       = {Cherednik, Ivan},
	year         = {1995}
}

@article{Opd95,
	title        = {Harmonic analysis for certain representations of graded {H}ecke algebras},
	volume       = {175},
	issn         = {0001-5962},
	url          = {http://projecteuclid.org/euclid.acta/1485890867},
	doi          = {10.1007/BF02392487},
	pages        = {75--121},
	number       = {1},
	journaltitle = {Acta Mathematica},
	shortjournal = {Acta Math.},
	author       = {Opdam, Eric M.},
	year         = {1995}
}

@article{Mac96,
	author    = {Macdonald, Ian G.},
	title     = {Affine {Hecke} algebras and orthogonal polynomials},
	journal   = {As\-t\'e\-ris\-que},
	note      = {S\'eminaire Bourbaki, vol. 1994/95, expos\'es 790--804},
	pages     = {talk no.~797, pp.~189--207},
	publisher = {Soci\'et\'e math\'ematique de France},
	number    = {237},
	year      = {1996},
	url       = {http://www.numdam.org/item/SB_1994-1995__37__189_0/}
}

@article{HLMvW11,
	title        = {Quasisymmetric {S}chur functions},
	volume       = {118},
	issn         = {0097-3165},
	doi          = {10.1016/j.jcta.2009.11.002},
	pages        = {463--490},
	number       = {2},
	journaltitle = {Journal of Combinatorial Theory, Series A},
	shortjournal = {Journal of Combinatorial Theory, Series A},
	author       = {Haglund, Jim and Luoto, Kurt and Mason, Sarah and van Willigenburg, Stephanie},
	year         = {2011}
}

@article{AS19,
	title        = {Properties of non-symmetric {M}acdonald polynomials at $q=1$ and $q=0$},
	volume       = {23},
	issn         = {0219-3094},
	doi          = {10.1007/s00026-019-00432-z},
	pages        = {219--239},
	number       = {2},
	journaltitle = {Annals of Combinatorics},
	shortjournal = {Ann. Comb.},
	author       = {Alexandersson, Per and Sawhney, Mehtaab},
	year         = {2019}
}

@article{MR04,
	title        = {Parametrizations of flag varieties},
	volume       = {8},
	issn         = {1088-4165},
	doi          = {10.1090/S1088-4165-04-00230-4},
	pages        = {212--242},
	number       = {9},
	journaltitle = {Representation Theory of the American Mathematical Society},
	shortjournal = {Represent. Theory},
	author       = {Marsh, Bethany R. and Rietsch, Konstanze},
	year         = {2004}
}

@article{Kno97,
	author    = {Knop, Friederich},
	doi       = {10.1515/crll.1997.482.177},
	issn      = {1435-5345},
	number    = {482},
	pages     = {177--190},
	title     = {Integrality of two variable Kostka functions},
	journal   = {Journal für die reine und angewandte Mathematik},
	number    = {482},
	pages     = {177--190},
	publisher = {De Gruyter},
	year      = {1997}
}

\end{document}